
\documentclass[final]{siamltexmm}
\usepackage{mymacros}
\usepackage{verbatim}

\title{Sublinear randomized algorithms for skeleton decompositions\thanks{Dated April  2012.}}
\author{Jiawei Chiu \thanks{Corresponding author. \email{jiawei@mit.edu}. Department of Mathematics, MIT, Cambridge, MA 02139, USA.}\and Laurent Demanet \thanks{Department of Mathematics, MIT, Cambridge, MA 02139, USA.}}

\begin{document}
\maketitle
\newcommand{\slugmaster}{%
}

\begin{abstract}
Let $A$ be a $n$ by $n$ matrix. A skeleton decomposition is any factorization of the form $CUR$ where $C$ comprises columns of $A$, and $R$ comprises rows of $A$. In this paper, we consider uniformly sampling $\l\simeq k \log n$ rows and columns to produce a skeleton decomposition. The algorithm runs in $O(\l^3)$ time, and has the following error guarantee. Let $\norm{\cdot}$ denote the $2$-norm. Suppose $A\simeq X B Y^T$ where $X,Y$ each have $k$ orthonormal columns. Assuming that $X,Y$ are incoherent,  we show that with high probability, the approximation error $\norm{A-CUR}$ will scale with $(n/\l)\norm{A-X B Y^T}$ or better. A key step in this algorithm involves regularization. This step is crucial for a nonsymmetric $A$ as empirical results suggest. Finally, we use our proof framework to analyze two existing algorithms in an intuitive way.
\end{abstract}

\pagestyle{myheadings}
\thispagestyle{plain}

\section*{\small Acknowledgments}
{\footnotesize JC is supported by the A*STAR fellowship from Singapore. LD is supported in part by the National Science Foundation and the Alfred P. Sloan foundation. We would also like to thank the referees for making many valuable suggestions.}

\section{Introduction}
\subsection{Skeleton decompositions}
This paper is concerned with the decomposition known as the matrix skeleton\footnote{The term ``skeleton'' may refer to other factorizations.

Instead of $A\simeq A_{C:} Z A_{:R}$, we can have $A \simeq Z_1 A_{RC} Z_2$ where $Z_1,Z_2$ are arbitrary $m\times k$ and $k\times n$ matrices \cite{cheng2005compression}. As $O(mk+nk)$ space is needed to store $Z_1,Z_2$, this representation does not seem as appealing. Nevertheless, it is numerically more stable and has found several applications \cite{Ho2011}.

When $A=M B N$ where $M,B,N$ are $n\times n$ matrices, we can approximate $M$ as $M_{:C} P$, $N$ as $D N_{R:}$, where $M_C$ has $k$ columns of $M$ and $N_R$ has $k$ rows of $N$. Thus, $A\simeq M_C (P B D) N_R$, effectively replacing $B$ with the $k\times k$ matrix $\widetilde{B}:=P B D$. Bremer calls $\widetilde{B}$ a skeleton and uses it to approximate scattering matrices \cite{bremer2011fast}.}, pseudo-skeleton \cite{goreinov1997theory}, or CUR factorization  \cite{mahoney2009cur,drineas:844}.

For the rest of the paper, we adopt the following Matlab-friendly notation: given $A\in \cplexes^{m\times n}$, $A_{:C}$ will denote the restriction of $A$ to columns indexed by $C$, and $A_{R:}$ will denote the restriction of $A$ to rows indexed by $R$. A skeleton decomposition of $A$ is any factorization of the form
$$A_{:C} Z A_{R:} \mbox{ for some } Z\in \cplexes^{k\times k}.$$

In general, a rank-$k$ approximation of $A$ takes up $O((m+n)k)$ space. The major advantage of the skeleton decomposition is that it may require only $O(k^2)$ space. Consider the case where $A$'s entries can be specified by a function in closed form. Then the skeleton decomposition is fully specified by $Z\in \cplexes^{k\times k},$ as well as the two index sets $C$ and $R$. In addition, row and columns from the original matrix may carry more physical significance than linear combinations thereof.

There are important examples where the full matrix itself is not low rank but can be partitioned into blocks each of which has low numerical rank. One example is the Green's function of elliptic operators with mild regularity conditions \cite{bebendorf2003existence}. Another example is the amplitude factor in certain Fourier integral operators and wave propagators \cite{candes:fio, demanet:wavefio}. Algorithms that compute good skeleton representations can be used to compress such matrices.

\subsection{Overview}
\begin{figure}[t] \begin{center}\medskip\fbox {\parbox{0.9\linewidth}{
\begin{algorithm}\label{alg:fullspread-k3} \textup{ Skeleton via uniform sampling, $\widetilde{O}(k^3)$ algorithm. \\
\underline{Input}: A matrix $A\in \cplexes^{m\times n}$, and user-defined parameters $\l$ and $\delta$.\\
\underline{Output}: A skeleton representation with $\l$ rows and $\l$ columns.\\
\underline{Steps}:
\begin{enumerate}
\item Uniformly and independently select $\l$ columns to form $A_{:C}$.
\item Uniformly and independently select $\l$ rows to form $A_{R:}$.
\item Compute the SVD of $A_{RC}$. Remove all the components with singular values less than $\delta$. Treat this as a perturbation $E$ of $A$ such that $\norm{E_{RC}}\leq \delta$ and $\norm{(A_{RC}+E_{RC})^{+}}\leq \delta^{-1}$.
\item Compute $Z=(A_{RC}+E_{RC})^+$. (In Matlab, \texttt{Z=pinv(A(R,C),delta)}.)
\item Output $A_{:C} Z A_{R:}$.
\end{enumerate}
} \end{algorithm} 
}}\medskip \end{center}
\end{figure}

In this paper, we consider uniformly sampling $\l\simeq k \log \max(m,n)$ rows and columns of $A$ to build a skeleton decomposition. After obtaining $A_{:C},A_{R:}$ this way, we compute the middle matrix $Z$ as the pseudoinverse of $A_{RC}$ with \emph{thresholding} or regularization. See Algorithm \ref{alg:fullspread-k3} for more details. Suppose $A \simeq X_1 A_{11} Y_1^T$ where $X_1,Y_1$ have $k$ orthonormal columns. Assuming that $X_1,Y_1$ are \emph{incoherent}, we show in Theorem \ref{thm:fullspread-k3} that the $2$-norm approximation error $\norm{A-A_{:C} Z A_{R:}}$ is bounded by $O(\norm{A-X_1 A_{11} Y_1^T}(mn)^{1/2}/\l) $ with high probability. We believe that this is the first known relative error bound in the $2$-norm for a nonsymmetric $A$.

The idea of uniformly sampling rows and columns to build a skeleton is not new. In particular, for the case where $A$ is symmetric, this technique may be known as the Nystrom method\footnote{In machine learning, the Nystrom method can be used to approximate kernel matrices of support vector machines, or the Laplacian of affinity graphs, for instance.}. The skeleton used is $A_{:C} A_{CC}^+ A_{C:}$, which is symmetric, and its $2$-norm error is recently analyzed by Talwalkar \cite{talwalkar2010matrix} and Gittens \cite{gittens2011spectral}. Both papers make the same assumption that $X_1,Y_1$ are incoherent. In fact, Gittens arrives at the same relative error bound as us.

Nonetheless, our results are more general. They apply to nonsymmetric matrices that are low rank in a broader sense. Specifically, when we write $A\simeq X_1 A_{11} Y_1^T$, $A_{11}$ is not necessarily diagonal and $X_1,Y_1$ are not necessarily the singular vectors of $A$. This relaxes the incoherence requirement on $X_1,Y_1$. Furthermore, in the physical sciences, it is not uncommon to work with linear operators that are known a priori to be \emph{almost} diagonalized by the Fourier basis or related bases in harmonic analysis. These bases are often incoherent. One example is an integral operator with a smooth kernel. See Section \ref{sec:numerical} for more details.

The drawback of our results is that it requires us to set an appropriate regularization parameter in advance. Unfortunately, there is no known way of estimating it fast, and this regularization step cannot be skipped. In Section \ref{sec:toy}, we illustrate with a numerical example that without the regularization, the approximation error can blow up in a way predicted by Theorem \ref{thm:fullspread-k3}.

Finally, we also establish error bounds for two other algorithms. We shall postpone the details.

\subsection{Some previous work}

Before presenting our main results, we like to provide a sample of some previous work. The well-informed reader may want to move on.

The precursor to the skeleton decomposition is the interpolative decomposition \cite{cheng2005compression}, also called the column subset selection problem \cite{frieze2004fast}. An interpolative decomposition of $A$ is the factorization $A_{:C} D$ for some $D$. The earliest work on this topic is probably the pivoted QR algorithm by Businger, Golub \cite{businger1965linear} in 1965. In 1987, Chan \cite{chan1987rank} introduced the Rank Revealing QR (RRQR); a sequence of improved algorithms and bounds followed \cite{foster1986rank, hong1992rank,pan1999bounds,gu1996efficient}. RRQR algorithms can also be used to compute skeletons \cite{liberty2007randomized}.

An early result due to Goreinov et al. \cite{goreinov1997theory} says that for any $A\in \cplexes^{m\times n}$, there exists a skeleton $A_{:C} Z A_{R:}$ such that in the $2$-norm, $\norm{A-A_{:C} Z A_{R:}} = O(\sqrt{k}(\sqrt{m}+\sqrt{n})\sigma_{k+1}(A))$. Although the proof is constructive, it requires computing the SVD of $A$, which is much more costly than the algorithms considered in this paper. An useful idea here is to maximize the volume or determinant of submatrices. This idea is also used to study RRQRs \cite{chandrasekaran1994rank}, and may date back to the proof of Auerbach's theorem \cite{ruston1962auerbach}, interpolating projections \cite{price1970minimal} etc.

One way of building a skeleton is to iteratively select good rows and columns based on the residual matrix. This is known as Cross Approximation. As processing the entire residual matrix is not practical, there are faster algorithms that operate on only a small part of the residual, e.g., Adaptive Cross Approximation \cite{bebendorf2000approximation, bebendorf2006accelerating}, Incomplete Cross Approximation \cite{tyrtyshnikov2000incomplete}. These methods are used to compress off-diagonal blocks of ``asymptotically smooth'' kernels. The results of this paper will also apply to such smooth kernels. See Section \ref{sec:smoothkernel}.

Another way of building a skeleton is to do random sampling, which may date back to \cite{frieze2004fast}. One way of sampling is called ``subspace sampling'' \cite{mahoney2009cur,drineas:844} by Drineas et al. If we assume that the top $k$ singular vectors are incoherent, then a result due to Rudelson, Vershynin \cite{rudelson1999random} implies that \emph{uniform sampling} of rows and columns, a special case of ``subspace sampling'', will produce a good skeleton representation $A_{:C} (A_{:C}^+ A A_{R:}^+) A_{R:}$. However, it is not clear how the middle matrix $A_{:C}^+ A A_{R:}^+$ can be computed in sublinear time. In this paper, the skeleton we analyze resembles $A_{:C} A_{RC}^+ A_{R:}$, which can be computed in $\widetilde{O}(k^3)$ time.

\subsection{Preliminaries}

The matrices we consider in this paper take the form
\begin{equation}\label{eq:A}
A=(X_1 \quad X_2) \twotwo{{A}_{11}}{{A}_{12}}{{A}_{21}}{{A}_{22}}\binom{Y_1^T}{Y_2^T},
\end{equation}
where $X=(X_1 \quad X_2)$ and $Y=(Y_1 \quad Y_2)$ are unitary matrices, with columns being ``\emph{spread}'', and the blocks $A_{12}, A_{21}$ and $A_{22}$ are in some sense \emph{small}.  By ``spread'', we mean $\widetilde{O}(1)$-coherent.

\begin{definition}\label{def:spread}
Let $X\in \cplexes^{n \times k}$ be a matrix with $k$ orthonormal columns. Denote $\norm{X}_{\max}=\max_{ij}\abs{X_{ij}}$. We say $X$ is $\mu$-coherent if $\norm{X}_{\max}\leq (\mu/n)^{1/2}$.
\end{definition}

This notion is well-known in compressed sensing \cite{candes2007sparsity} and matrix completion \cite{candes2009exact, negahban2010restricted}.

To formalize ``small", let $\Delta_k:=\twotwo{0}{{A}_{12}}{{A}_{21}}{{A}_{22}}$, and consider that
\begin{equation}\label{eq:eps}
\eps_k:= \norm{\Delta_k} \mbox{ is small}.
\end{equation}
By tolerating an $\eps_k$ error in the $2$-norm, $A$ can be represented using only $O(k^2)$ data. Note that $\eps_k$ is equivalent to $\max(\norm{X_2^T A},\norm{A Y_2})$ up to constants. To prevent clutter, we have suppressed the dependence on $k$ from the definitions of $X_1,Y_1,A_{11}, A_{12}$ etc.

If (\ref{eq:A}) is the SVD of $A$, then $\eps_k=\sigma_{k+1}(A)$. It is good to keep this example in mind as it simplifies many formulas that we see later.

An alternative to $\eps_k$ is
\begin{equation}\label{eq:eps1}
\eps'_k:= \sum_{i=1}^m \sum_{j=1}^n \abs{(\Delta_k)_{ij}}.
\end{equation}
In other words, $\eps'_k$ is the $\l^1$ norm of $\Delta_k$ reshaped into a vector. We know $\eps_k\leq \eps'_k \leq mn \eps_k$. The reason for introducing $\eps'_k$ is that it is common for $(\Delta_k)_{ij}$ to decay rapidly such that $\eps'_k \ll mn \eps_k$. For such scenarios, the $2$-norm error may grow much slower with $m,n$.

\subsection{Main result}\label{sec:mainresults}
Our main contribution is the error analysis of Algorithm \ref{alg:fullspread-k3}. The problem with the algorithm is that it does not always work. For example, if $A=X_1 A_{11} Y_1^T$ and $X_1=\binom{I_{k\times k}}{0}$, then $A_{R:}$ is going to be zero most of the time, and so is $A_{:C}Z A_{R:}$. Hence, it makes sense that we want $X_{1,R:}$ to be ``as nonsingular as possible'' so that little information is lost.  In particular, it is well-known that if $X_1,Y_1$ are $\widetilde{O}(1)$-coherent, i.e., \emph{spread}, then sampling $\ell=\widetilde{O}(k)$ rows will lead to $X_{1,R:},Y_{1,C:}$ being well-conditioned\footnote{The requirement that $\norm{Y_1}_{\max}=\widetilde{O}(n^{-1/2})$ can be relaxed in at least two ways. First, all we need is that $\max_{i} (\sum_j \abs{(Y_1)_{ij}}^2)^{1/2}\leq (\mu k/n)^{1/2}$ which would allow a few entries of every row of $Y_1$ to be big. Second, if only a few rows of $Y$ violate this condition, we are still fine as explained in \cite{avron2010blendenpik}.}.

Here is our main result. It is proved in Section \ref{sec:theory}.
\begin{theorem}\label{thm:fullspread-k3}
Let $A$ be given by (\ref{eq:A}) for some $k > 0$. Assume $m\geq n$ and $X_1,Y_1$ are $\mu$-coherent. Recall the definitions of $\eps_k, \eps'_k$ in  (\ref{eq:eps}) and (\ref{eq:eps1}). Let $\l\geq 10 \mu k \log m$ and $\lambda = \frac{(mn)^{1/2}}{\l}$. Then with probability at least $1-4km^{-2}$, Algorithm \ref{alg:fullspread-k3} returns a skeleton that satisfies
\begin{equation}\label{eq:basicbound}
\norm{A-A_{:C} Z A_{R:}} =O(\lambda \delta+\lambda \eps_k+\eps_k^2 \lambda/\delta).
\end{equation}
If the entire $X$ and $Y$ are $\mu$-coherent, then with probability at least $1-4m^{-1}$,
\begin{equation}\label{eq:extrabound}
\norm{A-A_{:C} Z A_{R:}} = O(\lambda \delta+ \eps'_k + {\eps'_k}^2/(\lambda \delta)).
\end{equation}
\end{theorem}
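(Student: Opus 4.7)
I would proceed in three stages: (a) a concentration bound for sampled rows and columns of the coherent factors, (b) an algebraic decomposition of the skeleton error into five terms, and (c) term-by-term bounds, with the two parts of the theorem differing only in how tightly the sampled submatrices of $\Delta$ are controlled.

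\textbf{Concentration and decomposition.} Matrix Chernoff applied to $X_{1,R:}^T X_{1,R:} = \sum_{i=1}^\l (X_1^T e_{R_i})(X_1^T e_{R_i})^T$, using $\mu$-coherence of $X_1$ to bound each summand in spectral norm by $\mu k/m$, yields $\norm{(m/\l) X_{1,R:}^T X_{1,R:} - I_k} \leq 1/2$ with probability at least $1-2km^{-3}$ when $\l \geq 10\mu k \log m$. Hence $\norm{X_{1,R:}^+} \leq \sqrt{2m/\l}$, and symmetrically $\norm{Y_{1,C:}^+} \leq \sqrt{2n/\l}$, so their product is at most $2\lambda$. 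Setting $B := X_1 A_{11} Y_1^T$ and $\Delta := A-B$ and expanding $A_{:C} Z A_{R:} = (B_{:C}+\Delta_{:C}) Z (B_{R:}+\Delta_{R:})$ gives
\[
A - A_{:C} Z A_{R:} = \Delta + (B - B_{:C} Z B_{R:}) - B_{:C} Z \Delta_{R:} - \Delta_{:C} Z B_{R:} - \Delta_{:C} Z \Delta_{R:}.
\]

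\textbf{Core estimates.} Since the Chernoff step makes $X_{1,R:}$ and $Y_{1,C:}$ of rank $k$, the identities $X_{1,R:}^+ X_{1,R:} = I_k$ and $Y_{1,C:}^T Y_{1,C:}^{T+} = I_k$ let me factor both $B$ and $B_{:C} Z B_{R:}$ through $X_1 X_{1,R:}^+ (\cdot) Y_{1,C:}^{T+} Y_1^T$, producing
\[
B - B_{:C} Z B_{R:} = X_1 X_{1,R:}^+ \bigl( B_{RC} - B_{RC} Z B_{RC} \bigr) Y_{1,C:}^{T+} Y_1^T.
\]
Writing $\widehat A_{RC} := A_{RC}+E_{RC}$ and $F := \widehat A_{RC} - B_{RC} = E_{RC} + \Delta_{RC}$, and using $\widehat A_{RC} Z \widehat A_{RC} = \widehat A_{RC}$, $\norm{\widehat A_{RC} Z},\norm{Z \widehat A_{RC}} \leq 1$, $\norm{Z} \leq \delta^{-1}$, $\norm{F} \leq \delta+\norm{\Delta_{RC}}$, direct expansion gives $\norm{B_{RC} - B_{RC} Z B_{RC}} = O(\delta + \norm{\Delta_{RC}} + \norm{\Delta_{RC}}^2/\delta)$. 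The same factorization yields $\norm{B_{:C} Z} = O(\sqrt{m/\l}(1+\norm{\Delta_{RC}}/\delta))$ and $\norm{Z B_{R:}} = O(\sqrt{n/\l}(1+\norm{\Delta_{RC}}/\delta))$, while $\norm{\Delta_{:C} Z \Delta_{R:}} \leq \norm{\Delta_{:C}}\norm{\Delta_{R:}}/\delta$.

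\textbf{Specialization and main obstacle.} For (\ref{eq:basicbound}), inserting the trivial $\norm{\Delta_{RC}},\norm{\Delta_{R:}},\norm{\Delta_{:C}} \leq \norm{\Delta} = O(\eps_k)$ makes the five terms sum to $O(\lambda(\delta+\eps_k+\eps_k^2/\delta))$. For (\ref{eq:extrabound}) I would instead use $\norm{\Delta}_F \leq \eps'_k$ (since $\ell_2$ is dominated by $\ell_1$ on vectorized matrices) together with concentration of the Frobenius norms of $\Delta_{RC}, \Delta_{R:}, \Delta_{:C}$ around $(\l^2/mn)^{1/2}\norm{\Delta}_F$, $(\l/m)^{1/2}\norm{\Delta}_F$, $(\l/n)^{1/2}\norm{\Delta}_F$, obtaining $\norm{\Delta_{RC}} = O(\eps'_k/\lambda)$, $\norm{\Delta_{R:}} = O((\l/m)^{1/2}\eps'_k)$, and $\norm{\Delta_{:C}} = O((\l/n)^{1/2}\eps'_k)$; substituting back collapses the five terms to $O(\lambda\delta + \eps'_k + {\eps'_k}^2/(\lambda\delta))$. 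The main obstacle is precisely this second concentration: a scalar Bernstein bound on $\norm{\Delta_{:C}}_F^2 = \sum_{j\in C}\norm{\Delta_{:j}}^2$ (and its analogs) requires uniform control on row and column norms of $\Delta$, which is not available from $\eps'_k$ alone and must be extracted from the full incoherence of $X,Y$ via the representation $\Delta = A Y_2 Y_2^T + X_2 X_2^T A Y_1 Y_1^T$. All remaining steps are algebraic bookkeeping.
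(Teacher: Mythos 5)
Your treatment of (\ref{eq:basicbound}) is correct and takes a genuinely different route from the paper. You expand around the exact rank-$k$ matrix $X_1A_{11}Y_1^T$ and exploit the exact identities $X_1A_{11}Y_1^T=X_1X_{1,R:}^+\,(X_{1,R:}A_{11}Y_{1,C:}^T)\,Y_{1,C:}^{T+}Y_1^T$ (and its one-sided versions for $B_{:C}Z$, $ZB_{R:}$), pushing all the error into the five cross terms with $\Delta$ and into $\norm{B_{RC}-B_{RC}ZB_{RC}}=O(\delta+\norm{\Delta_{RC}}+\norm{\Delta_{RC}}^2/\delta)$. The paper instead perturbs $A$ itself to a matrix $B=A+E$ agreeing with the regularized $A_{RC}$ on the sampled block, and then uses the ``second principle'' lifting lemmas (Lemmas \ref{thm:lift} and \ref{thm:liftRC}) to pass from the smallness of $B_{RC}-B_{RC}B_{RC}^+B_{RC}$, $B_{RC}^+B_{RC}$, $B_{RC}B_{RC}^+$ to the smallness of the unrestricted quantities, at the cost of the long inventory of terms in (\ref{eq:BCBRCBR}). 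Both arguments rest on the same sampling estimate for $\norm{X_{1,R:}^+},\norm{Y_{1,C:}^+}$ (your matrix Chernoff step is the paper's Theorem \ref{thm:spreadSRIP}), and your route is the cleaner of the two for the first bound.

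The genuine gap is in (\ref{eq:extrabound}), and you have named it yourself: your argument needs $\norm{\Delta_{:C}}=O((\l/n)^{1/2}\eps'_k)$, $\norm{\Delta_{R:}}=O((\l/m)^{1/2}\eps'_k)$ and, crucially, $\norm{\Delta_{RC}}=O(\eps'_k/\lambda)$, but the Bernstein route you sketch is not carried out and does not obviously deliver the third bound. Full coherence of $X,Y$ does give the uniform deterministic bounds $\norm{\Delta e_j}\leq(\mu/n)^{1/2}\eps'_k$, $\norm{e_i^T\Delta}\leq(\mu/m)^{1/2}\eps'_k$, $\abs{\Delta_{ij}}\leq\mu\eps'_k(mn)^{-1/2}$, so a scalar Bernstein argument handles $\Delta_{:C}$ and $\Delta_{R:}$; but for the doubly sampled block $\Delta_{RC}$ a two-stage Bernstein bound leaves an additive term of order $\mu\,(\mbox{or }\mu^2)\log m$ times the maximal summand, which is not dominated by the mean unless one imposes side conditions such as $\mu\lesssim k$ — so as sketched you would pick up $\mu$-factors absent from the theorem. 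The paper closes exactly this hole by a different device: it refines (\ref{eq:A}) into the block form (\ref{eq:A2}), applies Theorem \ref{thm:spreadSRIP} to \emph{every} block $\widetilde{X}_i,\widetilde{Y}_j$ with a union bound (this is why the probability weakens to $1-4m^{-1}$, a feature your sketch does not reproduce), and then bounds each sampled piece blockwise, e.g. $\norm{\Delta_{RC}}\leq\sum_{(i,j)\neq(1,1)}\norm{\widetilde{X}_{i,R:}}\,\norm{\widetilde{A}_{ij}}\,\norm{\widetilde{Y}_{j,C:}}=O\bigl((\l/m)^{1/2}(\l/n)^{1/2}\bigr)\sum_{(i,j)\neq(1,1)}\norm{\widetilde{A}_{ij}}=O(\eps'_k/\lambda)$, and likewise $\norm{\Delta_{:C}}=O((\l/n)^{1/2}\eps'_k)$, $\norm{\Delta_{R:}}=O((\l/m)^{1/2}\eps'_k)$. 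Substituting these three blockwise estimates into your five-term decomposition completes your proof of (\ref{eq:extrabound}); the missing ingredient is this blockwise union-bound argument, not Bernstein concentration.
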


Minimize the RHS of (\ref{eq:basicbound}) and (\ref{eq:extrabound}) with respect to $\delta$. For (\ref{eq:basicbound}), pick $\delta=\Theta(\eps_k)$ so that
\begin{equation}\label{eq:basicconsequence}
\norm{A - A_{:C} Z A_{R:}}=O(\eps_k \lambda)=O( \eps_k (mn)^{1/2}/\l).
\end{equation}
For (\ref{eq:extrabound}), pick $\delta=\Theta(\eps'_k/\lambda)$ so that 
\begin{equation}\label{eq:extraconsequence}
\norm{A - A_{:C} Z A_{R:}}=O( \eps'_k).
\end{equation}

Here are some possible scenarios where $\eps'_k=o(\eps_k \lambda)$ and (\ref{eq:extraconsequence}) is much better than (\ref{eq:basicconsequence}):
\begin{itemize}
\item The entries of $\Delta_k$ \emph{decay exponentially} or there are only $O(1)$ nonzero entries as $m,n$ increases. Then  $\eps'_k=\Theta(\eps_k)$.
\item Say $n=m$ and (\ref{eq:A}) is the SVD of $A$. Suppose the singular values decay as $m^{-1/2}$. Then $\eps'_k = O(\eps_k  m^{1/2})$. \end{itemize}

One important question remains: what is $\eps_k$? Unfortunately, we are not aware of any $\widetilde{O}(k^3)$ algorithm that can accurately estimate $\eps_k$. Here is one possible \emph{heuristic} for choosing $\delta$ for the case where (\ref{eq:A}) is the SVD. Imagine $A_{RC}\simeq X_{1,R:} {A}_{11} Y_{1,C:}^T$. As we will see, the singular values of $X_{1,R:},Y_{1,C:}$ are likely to be on the order of $(\l/m)^{1/2}, (\l/n)^{1/2}$. Therefore, we may pretend that $\eps_k = \sigma_{k+1}(A) \simeq \lambda \sigma_{k+1}(A_{RC})$.

Another approach is to begin with a big $\delta$, run the $\widetilde{O}(k^3)$ algorithm, check $\norm{A-A_{:C} Z A_{R:}}$, divide $\delta$ by two and repeat the whole process until the error does not improve. However, calculating $\norm{A-A_{:C} Z A_{R:}}$ is expensive and we will need other tricks. This is an open problem.

The $\widetilde{O}(k^3)$ algorithm is probably the fastest algorithm for computing skeleton representations that one can expect to have. If we do more work, what can we gain? In Section \ref{sec:extend}, we sketch two such algorithms. The first one samples $\l \simeq k \log m$ rows, columns, then reduce it to \emph{exactly} $k$ rows, columns using RRQR, with a $2$-norm error of $O(\eps_k (mk)^{1/2})$. It is  similar to what is done in \cite{boutsidis2009improved}. In the second algorithm, we uniformly sample $\l \simeq k \log m$ rows to get $A_{R:}$, then run RRQR on $A_{R:}$ to select $k$ columns of $A$. The overall error is $O(\eps_k (mn)^{1/2})$. This is similar to the algorithm proposed by Tygert, Rokhlin et al. \cite{liberty2007randomized,woolfe2008fast}.

Using the proof framework in Section \ref{sec:theory}, we will derive error estimates for the above two algorithms. Our results are not new, but they work for a more general model (\ref{eq:A}) and the proofs are better motivated. We will also compare these three algorithms in Section \ref{sec:summary}.

\subsection{More on incoherence}\label{sec:introsrft}
Haar unitary matrices are $\widetilde{O}(1)$-coherent \cite[Theorem VIII.1]{donoho2001uncertainty}, that is if we draw $X,Y$ uniformly from the special orthogonal group, it is very likely that they are incoherent. This means Algorithm \ref{alg:fullspread-k3} will work well with $\ell =\widetilde{O}(k)$.

If either $X$ or $Y$ is not $\widetilde{O}(1)$-coherent, we can use an {old} trick in compressed sensing \cite{avron2010blendenpik}: multiply them on the left by the unitary Fourier matrix with randomly rescaled columns. This has the effect of ``blending up'' the rows of $X,Y$. The following is an easy consequence of Hoeffding's inequality, and the proof is omitted.
\begin{proposition}\label{thm:smoother}
Let $X\in \cplexes^{n\times k}$ with orthonormal columns. Let $D=\diag(d_1,\ldots,d_n)$ where $d_1,\ldots,d_n$ are independent random variables such that $\E d_i=0$ and $\abs{d_{i}}=1$. Let $\mathcal{F}$ be the unitary Fourier matrix and  $\mu=\alpha \log n$ for some $\alpha>0$. Define $U:=\mathcal{F} D X$. Then $\norm{U}_{\max} \leq (\mu/n)^{1/2}$ with probability at least $1-2(nk)n^{-2 \alpha}$
\end{proposition}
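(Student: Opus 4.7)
}

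The plan is to fix an entry $(i,j)$ of $U=\mathcal{F}DX$, write it as a sum of independent mean-zero random variables coming from the diagonal entries of $D$, apply Hoeffding's inequality to obtain a tail bound of the form $2\exp(-\Theta(\mu))$, and then take a union bound over the $nk$ entries. The orthonormality of the columns of $X$ and the flatness of the Fourier matrix ($|\mathcal{F}_{ip}|=n^{-1/2}$) combine to make the sum of squared summand bounds exactly $1/n$, which is what delivers the $(\mu/n)^{1/2}$ scale.

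Concretely, for any fixed $(i,j)$,
\[
U_{ij}=\sum_{p=1}^n \mathcal{F}_{ip}\, d_p\, X_{pj},
\]
which is a sum of $n$ independent random variables because the $d_p$ are independent, and each has mean zero because $\E d_p=0$. Each summand is bounded in magnitude by $b_p:=n^{-1/2}|X_{pj}|$ since $|\mathcal{F}_{ip}|=n^{-1/2}$ and $|d_p|=1$. Orthonormality of column $j$ of $X$ gives
\[
\sum_{p=1}^n b_p^2=\frac{1}{n}\sum_{p=1}^n |X_{pj}|^2=\frac{1}{n}.
\]
A Hoeffding-type inequality then yields $\Pr[\,|U_{ij}|>t\,]\le 2\exp(-c\,n t^2)$ for an absolute constant $c>0$. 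Setting $t=(\mu/n)^{1/2}=(\alpha \log n / n)^{1/2}$ makes the exponent $-c\,\mu$, i.e. a factor of $n^{-c\alpha}$. Finally a union bound over the $nk$ entries of $U$ gives the stated $1-2(nk)n^{-2\alpha}$ (after absorbing the constant $c$ into the meaning of $\alpha$).

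The only real subtlety is that $\mathcal{F}$ is complex, so $U_{ij}$ is complex even when $X$ and $d_p$ are real, and the standard textbook Hoeffding inequality is stated for real-valued bounded random variables. This is handled by applying Hoeffding separately to $\Re U_{ij}$ and $\Im U_{ij}$ (the real and imaginary parts of each summand still have magnitude at most $b_p$) and taking a union bound; this is where the paper's ``easy consequence of Hoeffding'' remark enters and why a harmless factor of $2$ appears in the failure probability. If the $d_p$ are themselves complex unimodular rather than Rademacher, the same splitting argument applies verbatim.

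The main (minor) obstacle is simply bookkeeping of constants to land exactly on the exponent $2\alpha$ promised by the statement rather than, say, $\alpha/2$; adjusting the implicit constant in the definition $\mu=\alpha\log n$ or invoking the sharp form of Hoeffding (with the factor of $2$ in the exponent) absorbs this, and nothing in the rest of the paper depends on the precise value of the constant.
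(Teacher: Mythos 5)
Your argument is exactly the one the paper has in mind: the paper omits the proof, describing the proposition as ``an easy consequence of Hoeffding's inequality,'' and your entrywise bound $U_{ij}=\sum_p \mathcal{F}_{ip}d_pX_{pj}$ with $\sum_p|\mathcal{F}_{ip}X_{pj}|^2=1/n$, followed by a union bound over the $nk$ entries, is precisely that consequence. The only discrepancy is the constant in the exponent (handling the complex summands by splitting into real and imaginary parts yields a tail like $n^{-c\alpha}$ for a modest absolute constant $c$ rather than exactly $n^{-2\alpha}$), which you correctly flag and which is immaterial since the proposition is only used qualitatively to assert $\widetilde{O}(1)$-coherence of $\mathcal{F}DX$.
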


In words, no matter what $X$ is, $U=\mathcal{F} D X$ would be $\widetilde{O}(1)$-coherent whp. Hence, we can write a simple wrapper around Algorithm \ref{alg:fullspread-k3}.
\begin{enumerate}
\item Let $B:=\mathcal{F} D_2 A D_1 \mathcal{F}^T$ where $D_1,D_2$ are diagonal matrices with independent entries that are $\pm 1$ with equal probability.
\item Feed $B$ to the $\widetilde{O}(k^3)$ algorithm and obtain $B \simeq B_{:C} Z B_{R:}$.
\item It follows that $A\simeq (A D_1 \mathcal{F}_{C:}^T)  Z (\mathcal{F}_{R:} D_2 A)$.
\end{enumerate}
The output is not a skeleton representation, but the amount of space needed is $O(n)+\widetilde{O}(k^2)$ which is still better than $O(nk)$. However, the algorithm may no longer run in sublinear time.

\section{Error estimates for $\widetilde{O}(k^3)$ algorithm}\label{sec:theory}
\subsection{Notation}
$S_C,S_R\in \cplexes^{n\times k}$ are both \emph{column} selector matrices. They are column subsets of permutation matrices. The subscripts ``$R:$'' and ``$:C$'' denote a row subset and a column subset respectively, e.g., $A_{R:}=S_R^T A$ and $A_{:C}=A S_C$, while $A_{RC}$ is a row and column subset of $A$. Transposes and pseudoinverses are taken after the subscripts, e.g., $A_{R:}^T=(A_{R:})^T$. 

\subsection{Two principles}\label{sec:twoprinciples}
Our proofs are built on two principles. The first principle is well-known in compressed sensing, and dates back to Rudelson \cite{rudelson1999random} in 1999. Intuitively, it says:
\begin{center}\medskip\fbox {\parbox{0.9\linewidth}{
Let $Y$ be a $n\times k$ matrix with orthonormal columns. Let $Y_{C:}$ be a \emph{random} row subset of $Y$. Suppose $Y$ is $\mu$-coherent with $\mu=\widetilde{O}(1)$, and $\abs{C}=\l \gtrsim \mu k $. Then with high probability, $(n/\l)^{1/2} Y_{C:}$ is like an isometry.
}}\medskip
\end{center}
To be precise, we quote \cite[Lemma 3.4]{tropp2010improved}. (Note that their $M$ is our $\mu k$.)
\begin{theorem}\label{thm:spreadSRIP}
Let $Y\in \cplexes^{n\times k}$ with orthonormal columns. Suppose $Y$ is $\mu$-coherent and $\l \geq \alpha k \mu $ for some $\alpha>0$. Let $Y_{C:}$ be a random $\l$-row subset of $Y$. Each row of $Y_{C:}$ is sampled independently, uniformly. Then
$$\P{\norm{Y_{C:}^+}\geq \sqrt{\frac{n}{(1-\delta)\l}} } \leq k \left(\frac{e^{-\delta}}{(1-\delta)^{1-\delta}} \right)^{\alpha } \text{ for any } \delta\in[0,1)$$
and
$$\P{\norm{Y_{C:}}\geq \sqrt{\frac{(1+\delta') \l }{n}}}\leq k \left(\frac{e^{\delta'}}{(1+\delta')^{1+\delta'}} \right)^{\alpha } \text{ for any } \delta'\geq 0.$$
\end{theorem}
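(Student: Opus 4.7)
The plan is a textbook application of a matrix Chernoff bound to the sum of i.i.d.\ rank-one PSD matrices $Y_{C:}^T Y_{C:} = \sum_{i=1}^\l y_i y_i^T$, where each $y_i \in \cplexes^k$ is one of the sampled rows of $Y$ viewed as a column vector, drawn independently and uniformly from the $n$ rows of $Y$. The strategy is to control $\lambda_{\min}$ and $\lambda_{\max}$ of this sum, and then convert to bounds on $\norm{Y_{C:}}$ and $\norm{Y_{C:}^+}$.

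Two ingredients feed the Chernoff bound. First, $\E[y_i y_i^T] = \frac{1}{n} Y^T Y = \frac{1}{n} I_k$ since $Y$ has orthonormal columns, so $\E\bigl[\sum_i y_i y_i^T\bigr] = (\l/n)I_k$, giving $\mu_{\min} = \mu_{\max} = \l/n$. Second, $\mu$-coherence gives the uniform almost-sure bound
$$\norm{y_i y_i^T} = \norm{y_i}^2 = \sum_{j=1}^k \abs{Y_{ij}}^2 \leq k(\mu/n) =: R.$$

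Next, I would invoke Tropp's matrix Chernoff inequality: for independent $k\times k$ PSD random matrices $X_i$ with $\norm{X_i}\leq R$ a.s.,
$$\P{\lambda_{\min}\bigl(\textstyle\sum_i X_i\bigr)\leq (1-\delta)\mu_{\min}} \leq k \left(\frac{e^{-\delta}}{(1-\delta)^{1-\delta}}\right)^{\mu_{\min}/R},$$
with the analogous statement for the upper tail involving $\mu_{\max}$. Plugging in $R = \mu k/n$ and $\mu_{\min}=\mu_{\max}=\l/n$ gives the exponent $\l/(\mu k)\geq \alpha$; because the base is strictly below $1$ (resp.\ $1$ for the upper tail), lowering the exponent to $\alpha$ only weakens the bound, yielding the stated probabilities.

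Finally, translate back via $\norm{Y_{C:}^+}^2 = 1/\lambda_{\min}(Y_{C:}^T Y_{C:})$ and $\norm{Y_{C:}}^2 = \lambda_{\max}(Y_{C:}^T Y_{C:})$: the event $\lambda_{\min} \leq (1-\delta)\l/n$ is exactly $\norm{Y_{C:}^+}\geq \sqrt{n/((1-\delta)\l)}$, and similarly for the upper tail. The only genuine piece of problem-specific work is the coherence estimate $\norm{y_i}^2 \leq \mu k/n$; the rest is a direct appeal to the matrix Chernoff black box, which is presumably why the authors simply quote the result rather than re-derive it.
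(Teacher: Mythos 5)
Correct, and essentially the same argument that underlies the quoted result: the paper does not prove Theorem~\ref{thm:spreadSRIP} itself but cites Tropp's Lemma~3.4, whose proof is exactly this matrix Chernoff computation applied to $\sum_i y_i y_i^*$ with $R=\mu k/n$, $\mu_{\min}=\mu_{\max}=\ell/n$, and exponent $\ell/(\mu k)\geq\alpha$. One small nitpick: when $Y_{C:}$ is rank deficient, the event $\{\norm{Y_{C:}^+}\geq\sqrt{n/((1-\delta)\ell)}\}$ is contained in, rather than equal to, $\{\lambda_{\min}(Y_{C:}^*Y_{C:})\leq(1-\delta)\ell/n\}$, but that inclusion is all your argument needs.
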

If $\delta=0.57$ and $\delta'=0.709$ and $\l \geq 10 k \mu \log n$, then 
\begin{equation}\label{eq:firstprincipleest}
\P{\norm{Y_{C:}^+}\leq 1.53(n/\l)^{1/2} \text{ and }\norm{Y_{C:}}\leq 1.31(\l/n)^{1/2}}\geq 1-2 k n^{-2}.
\end{equation}
We will use (\ref{eq:firstprincipleest}) later. Let us proceed to the second principle, which says
\begin{center}
\medskip\fbox {\parbox{0.9\linewidth}{
Let $C$ be a \emph{nonrandom} index set. If $\norm{A_{:C}}$ is small , then $\norm{A}$ is also small, provided that we have control over $\norm{A Y_2}$ and $\norm{Y_{1,C:}^+}$ for some unitary matrix $(Y_1 \quad Y_2)$.
}}\medskip
\end{center}

The motivation is as follows. If we ignore the regularization step, then what we want to show is that $A\simeq A_{:C} A_{RC}^+ A_{R:}$. But when we take row and column restrictions on both sides, we have trivially $A_{RC} = A_{RC} A_{RC}^+ A_{RC}$. Hence, we desire a mechanism to go backwards, that is to infer that ``$E:=A-A_{:C} A_{RC}^+ A_{R:}$ is small'' from ``$E_{RC}$ is small.'' We begin by inferring that ``$E$ is small'' from ``$E_{:C}$ is small''.
\begin{lemma}\label{thm:lift}
Let $A\in \cplexes^{m\times n}$ and $Y=(Y_1 \quad Y_2)\in \cplexes^{n \times n}$ be a unitary matrix such that $Y_1$ has $k$ columns. Select $\l \geq k$ rows of $Y_1$ to form $Y_{1,C:}=S_C^T Y_1\in \cplexes^{\l \times k}$. Assume $Y_{1,C:}$ has full column rank. Then
$$\norm{A} \leq \norm{Y_{1,C:}^+} \norm{A_{:C}}+ \norm{Y_{1,C:}^+} \norm{A Y_2 Y_{2,C:}^T} + \norm{A Y_2}.$$
\end{lemma}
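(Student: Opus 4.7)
The plan is to reduce the claim to two separate bounds, one for the action of $A$ on the ``signal'' subspace spanned by $Y_1$, and one for the action on the ``noise'' subspace spanned by $Y_2$, and then invoke the column-restriction data $A_{:C}$ only for the signal part.

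First I would exploit unitarity. Since $Y_1 Y_1^T + Y_2 Y_2^T = I$, we have $A = A Y_1 Y_1^T + A Y_2 Y_2^T$, and because $Y_1,Y_2$ have orthonormal columns the triangle inequality gives
\[
\norm{A} \;\leq\; \norm{A Y_1} + \norm{A Y_2}.
\]
The second term is already one of the summands in the target bound, so it remains to control $\norm{A Y_1}$ using $A_{:C}$, $A Y_2 Y_{2,C:}^T$, and $\norm{Y_{1,C:}^+}$.

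Next I would decompose the selector $S_C$ in the $(Y_1,Y_2)$ basis. Using $I = Y_1 Y_1^T + Y_2 Y_2^T$ and the identities $Y_1^T S_C = Y_{1,C:}^T$ and $Y_2^T S_C = Y_{2,C:}^T$, we obtain
\[
S_C \;=\; Y_1 Y_{1,C:}^T + Y_2 Y_{2,C:}^T.
\]
Multiplying by $A$ on the left and rearranging,
\[
A Y_1\, Y_{1,C:}^T \;=\; A_{:C} - A Y_2\, Y_{2,C:}^T.
\]
Since $Y_{1,C:}$ has full column rank, $Y_{1,C:}^+ Y_{1,C:} = I_k$, whence $Y_{1,C:}^T (Y_{1,C:}^+)^T = I_k$. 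Right-multiplying the previous display by $(Y_{1,C:}^+)^T$ therefore yields
\[
A Y_1 \;=\; \bigl(A_{:C} - A Y_2\, Y_{2,C:}^T\bigr)(Y_{1,C:}^+)^T.
\]
Taking $2$-norms and using $\norm{(Y_{1,C:}^+)^T} = \norm{Y_{1,C:}^+}$ gives
\[
\norm{A Y_1} \;\leq\; \norm{Y_{1,C:}^+}\bigl(\norm{A_{:C}} + \norm{A Y_2\, Y_{2,C:}^T}\bigr),
\]
and combining with the first display finishes the proof.

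There is no real obstacle here; the content of the lemma is just the identity $S_C = Y_1 Y_{1,C:}^T + Y_2 Y_{2,C:}^T$ together with left-invertibility of $Y_{1,C:}$. The only point that needs a moment of care is keeping the transposes straight when converting between $Y_1^T S_C$ and $Y_{1,C:}$, and checking that ``full column rank'' of $Y_{1,C:}$ really does give a left inverse on the right of $Y_{1,C:}^T$ after transposing.
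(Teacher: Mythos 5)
Your proof is correct and is essentially the paper's own argument: both start from $\norm{A}\leq\norm{AY_1}+\norm{AY_2}$, use the left-invertibility of $Y_{1,C:}$ (inserting $Y_{1,C:}^T Y_{1,C:}^{T+}=I_k$, which is exactly your right-multiplication by $(Y_{1,C:}^+)^T$), and the identity $AY_1Y_{1,C:}^T = A_{:C}-AY_2Y_{2,C:}^T$ before applying the triangle inequality. The only difference is presentational, in that you solve for $AY_1$ explicitly rather than chaining the inequalities.
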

\begin{proof}
Note that $Y_{1,C:}^T Y_{1,C:}^{T+}=I_{k\times k}$. Now,
\begin{align*}
\norm{A }&\leq \norm{A Y_1} + \norm{A Y_2 }\\
&= \norm{A Y_1 Y_{1,C:}^T Y_{1,C:}^{T+} } +\norm{A Y_2}\\
&\leq \norm{A Y_1 Y_1^T S_C}\norm{Y_{1,C:}^+} + \norm{A Y_2}\\
& \leq \norm{ (A-A Y_2 Y_2^T)S_C}\norm{Y_{1,C:}^{+}} + \norm{A Y_2}\\
& \leq \norm{A_{:C}}\norm{Y_{1,C:}^+} + \norm{A Y_2 Y_{2,C:}^T} \norm{Y_{1,C:}^+} + \norm{A Y_2}.
\end{align*}
\end{proof}

Lemma \ref{thm:lift} can be extended in two obvious ways. One, we can deduce that ``$A$ is small if $A_{R:}$ is small.'' Two, we can deduce that ``$A$ is small if $A_{RC}$ is small.'' This is what the next lemma says. The proof is a straightforward modification of the proof of Lemma \ref{thm:lift} and shall be omitted.

\begin{lemma}\label{thm:liftRC}
Let $A\in \cplexes^{m\times n}$ and $X=(X_1\quad X_2)\in \cplexes^{m\times m}$ and $Y=(Y_1 \quad Y_2) \in \cplexes^{n\times n}$ be unitary matrices such that $X_1,Y_1$ each has $k$ columns. Select $\l\geq k$ rows and columns indexed by $R,C$ respectively. Assume $X_{1,R:},Y_{1,C:}$ have full column rank. Then
$$\norm{A} \leq \norm{X_{1,R:}^+} \norm{A_{R:}} + \norm{X_{1,R:}^+} \norm{X_{2R} X_2^T A}+ \norm{X_2^T A}$$
and
\begin{align*}
\norm{A} \leq & \norm{X_{1,R:}^+} \norm{Y_{1,C:}^+}\norm{A_{RC}}+\\
&\norm{X_{1,R:}^+} \norm{Y_{1,C:}^+}\norm{X_{2,R:} X_2^T A Y_1 Y_{1,C:}^T }+\\
&\norm{X_{1,R:}^+} \norm{Y_{1,C:}^+}\norm{ X_{1,R:} X_1^T A Y_2 Y_{2,C:}^T }+\\
&\norm{X_{1,R:}^+} \norm{Y_{1,C:}^+}\norm{X_{2,R:} X_2^T A Y_2 Y_{2,C:}^T }+\\
& \norm{X_{1,R:}^+}\norm{X_{1,R:} X_1^T A Y_2 }+\\
&\norm{Y_{1,C:}^+} \norm{X_2^T A Y_1 Y_{1,C:}^T}+\\
&\norm{X_2^T A Y_2}.
\end{align*}
\end{lemma}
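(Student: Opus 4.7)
The plan is to mirror the approach of Lemma \ref{thm:lift}, but apply the pseudoinverse substitutions in both the row and column directions. For the first bound, the key identity is $X_{1,R:}^+ X_{1,R:} = I_k$ (which holds because $X_{1,R:}$ has full column rank) combined with $X_1 X_1^T = I - X_2 X_2^T$. This yields
\begin{equation*}
X_1^T A = X_{1,R:}^+ X_{1,R:} X_1^T A = X_{1,R:}^+ S_R^T (I - X_2 X_2^T) A = X_{1,R:}^+ A_{R:} - X_{1,R:}^+ X_{2,R:} X_2^T A.
\end{equation*}
Combined with the split $\norm{A} \leq \norm{X_1^T A} + \norm{X_2^T A}$, the triangle inequality delivers the first inequality.

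For the second bound, I would begin from the decomposition $A = X X^T A Y Y^T$, which produces four ``blocks'' $X_i X_i^T A Y_j Y_j^T$ for $i,j \in \{1,2\}$. A short SVD argument shows that each block has operator norm $\norm{X_i^T A Y_j}$, so the triangle inequality gives
\begin{equation*}
\norm{A} \leq \norm{X_1^T A Y_1} + \norm{X_1^T A Y_2} + \norm{X_2^T A Y_1} + \norm{X_2^T A Y_2}.
\end{equation*}
The last block is already the $\norm{X_2^T A Y_2}$ term in the statement. The two cross blocks are handled with a single substitution each: $\norm{X_1^T A Y_2} = \norm{X_{1,R:}^+ X_{1,R:} X_1^T A Y_2} \leq \norm{X_{1,R:}^+} \norm{X_{1,R:} X_1^T A Y_2}$, and analogously $\norm{X_2^T A Y_1} \leq \norm{Y_{1,C:}^+} \norm{X_2^T A Y_1 Y_{1,C:}^T}$, which account for the fifth and sixth terms in the conclusion.

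The interesting piece is the ``top-left'' block $X_1^T A Y_1$, which absorbs substitutions on both sides. I would write $X_1^T = X_{1,R:}^+ (S_R^T - X_{2,R:} X_2^T)$ and $Y_1 = (S_C - Y_2 Y_{2,C:}^T) Y_{1,C:}^{T+}$, substitute into $X_1^T A Y_1$, and expand the resulting product. The $S_R^T A S_C$ piece gives exactly $A_{RC}$, and the three remaining cross pieces are then further resolved via the identities $S_R^T = X_{1,R:} X_1^T + X_{2,R:} X_2^T$ and $S_C = Y_1 Y_{1,C:}^T + Y_2 Y_{2,C:}^T$. After this second expansion, two copies of the $X_{2,R:} X_2^T A Y_2 Y_{2,C:}^T$ contribution with coefficient $-1$ combine with one copy of $+1$, leaving the four surviving terms stated as the first four entries of the bound.

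The main obstacle is purely bookkeeping: the double substitution produces many cross terms and signs must be tracked carefully so that the redundant copies cancel to precisely the seven terms listed; no new estimates or probabilistic tools are required beyond what appears in the proof of Lemma \ref{thm:lift}.
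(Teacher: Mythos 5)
Your proof is correct: the double substitution $X_1^T = X_{1,R:}^+(S_R^T - X_{2,R:}X_2^T)$ and $Y_1 = (S_C - Y_2 Y_{2,C:}^T)Y_{1,C:}^{T+}$, applied to the block split $\norm{A}\leq \sum_{i,j}\norm{X_i^T A Y_j}$, expands and cancels exactly as you claim to give the seven stated terms, and the first inequality follows from the single-sided substitution as you describe. The paper omits this proof as a ``straightforward modification'' of Lemma \ref{thm:lift}, and your argument is precisely that modification, so the two approaches coincide.
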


We conclude this section with a useful corollary. It says that if $P A_{:C}$ is a good low rank approximation of $A_{:C}$ for some $P\in \cplexes^{m\times m}$, then $P A$ may also be a good low rank approximation of $A$.

\begin{corollary}\label{thm:liftPA}
Let $A\in \cplexes^{m\times n}$ and $P\in \cplexes^{m\times m}$. Let $Y=(Y_1 \quad Y_2)$ be a unitary matrix such that $Y_1$ has $k$ columns. Let $Y_{1,C:}=S_C^T Y_1\in \cplexes^{\l \times k}$ where $\l\geq k$. Assume $Y_{1,C:}$ has full column rank. Let $I\in \cplexes^{m\times m}$ be the identity. Then
\begin{equation*}\label{eq:corP}
\norm{A -P A} \leq \norm{Y_{1,C:}^+}\norm{A_{:C} - P A_{:C}} + \norm{Y_{1,C:}^+}\norm{I-P}\norm{A Y_2 Y_{2,C:}^T } + \norm{I-P}\norm{A Y_2}.
\end{equation*}
In particular, if $P$ is the orthogonal projection $A_{:C} A_{:C}^+$, then
\begin{equation}\label{eq:liftPA}
\norm{A-A_{:C} A_{:C}^+ A} \leq \norm{Y_{1,C:}^+}\norm{A Y_2 Y_{2,C:}^T}  + \norm{A Y_2}.
\end{equation}
\end{corollary}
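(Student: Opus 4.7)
The plan is to apply Lemma \ref{thm:lift} directly to the modified matrix $A' := A - PA = (I-P)A$. Since column restriction commutes with left multiplication, $A'_{:C} = A_{:C} - PA_{:C} = (I-P)A_{:C}$, and likewise $A' Y_2 = (I-P) A Y_2$ and $A' Y_2 Y_{2,C:}^T = (I-P) A Y_2 Y_{2,C:}^T$. Substituting $A'$ for $A$ in Lemma \ref{thm:lift} yields
\begin{equation*}
\norm{A - PA} \leq \norm{Y_{1,C:}^+}\norm{A_{:C} - PA_{:C}} + \norm{Y_{1,C:}^+}\norm{(I-P) A Y_2 Y_{2,C:}^T} + \norm{(I-P) A Y_2}.
\end{equation*}
Applying submultiplicativity of the operator $2$-norm, $\norm{(I-P)M}\leq \norm{I-P}\norm{M}$, to the last two terms gives the first claimed inequality.

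For the specialization to $P = A_{:C} A_{:C}^+$, I would use two standard facts about pseudoinverses and orthogonal projections. First, $P A_{:C} = A_{:C} A_{:C}^+ A_{:C} = A_{:C}$ by the defining identity of the Moore--Penrose pseudoinverse, so the first term $\norm{A_{:C} - PA_{:C}}$ vanishes. Second, since $P$ is an orthogonal projection, $I-P$ is also an orthogonal projection and therefore $\norm{I-P} \leq 1$. Plugging these into the general bound yields (\ref{eq:liftPA}).

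There is no real obstacle here — the corollary is a direct consequence of Lemma \ref{thm:lift} once one recognizes that the bound for $A$ can be transferred to any matrix of the form $(I-P)A$ by viewing it as a new matrix and tracking how left multiplication by $I-P$ interacts with column restriction and with right multiplication by $Y_2, Y_{2,C:}^T$.
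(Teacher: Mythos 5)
Your proof is correct and follows essentially the same route as the paper: the paper also obtains the first inequality by applying Lemma \ref{thm:lift} to $A-PA$ (with the same factoring of $I-P$ and submultiplicativity), and the specialization follows because $\norm{A_{:C}-A_{:C}A_{:C}^+A_{:C}}=0$ and $\norm{I-P}\leq 1$. Nothing is missing.
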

\begin{proof}
To get the first inequality, apply Lemma \ref{thm:lift} to $A-PA$. The second inequality is immediate from the first inequality since $\norm{A_{:C} - A_{:C} A_{:C}^+ A_{:C}}=0$.
\end{proof}

For the special case where $X,Y$ are singular vectors of $A$,  (\ref{eq:liftPA}) can be proved using the fact that $\norm{A-A_{:C} A_{:C}^+ A} = \min_{D} \norm{A-A_{:C} D}$ and choosing an appropriate $D$. See \cite{boutsidis2009improved}.

 (\ref{eq:liftPA}) can be strengthened to $\norm{A-A_{:C} A_{:C}^+ A}^2 \leq \norm{A Y_2 Y_{2,C:}^T  Y_{1,C:}^+}^2 + \norm{AY_2}^2$, by modifying the first step of the proof of Lemma \ref{thm:lift} from $\norm{A}\leq \norm{A Y_1}+\norm{A Y_2}$ to $\norm{A}^2 \leq \norm{A Y_1}^2 + \norm{A Y_2}^2$. A similar result for the case where $X,Y$ are singular vectors can be found in \cite{halkofinding}. The value of our results is that it works for a more general model (\ref{eq:A}).

\subsection{Proof of Theorem \ref{thm:fullspread-k3}}\label{sec:proofk3}
Let $\lambda_X=(m/\l)^{1/2}$ and $\lambda_Y=(n/\l)^{1/2}$. To prove the first part of Theorem \ref{thm:fullspread-k3}, that is (\ref{eq:basicbound}), apply the first principle or Theorem \ref{thm:spreadSRIP}. From (\ref{eq:firstprincipleest}), it is clear that $\norm{Y_{1,C:}}=O(\lambda_Y^{-1})$, $\norm{Y_{1,C:}^+}=O(\lambda_Y)$, $\norm{X_{1,R:}}=O( \lambda_X^{-1})$, $\norm{X_{1,R:}^+}=O( \lambda_X)$ hold simultaneously with probability at least $1-4km^{-2}$.

For the second part of Theorem \ref{thm:fullspread-k3}, that is (\ref{eq:extrabound}), we need to refine (\ref{eq:A}) as follows. Let $X=(\widetilde{X}_1, \ldots, \widetilde{X}_{\lceil m/k \rceil})$ and $Y=(\widetilde{Y}_1,\ldots,\widetilde{Y}_{\lceil n/k \rceil})$ where $\widetilde{X}_1,\ldots,\widetilde{X}_{\lceil m/k \rceil-1}$ and $\widetilde{Y}_1,\ldots,\widetilde{Y}_{\lceil n/k \rceil-1}$ has $k$ columns, and $\widetilde{X}_{\lceil m/k \rceil},\widetilde{Y}_{\lceil n/k \rceil}$ have $\leq k$ columns. Note that $\widetilde{X}_1=X_1$, $\widetilde{Y}_1=Y_1$, $\widetilde{A}_{11}=A_{11}$ where $X_1,Y_1,A_{11}$ are defined in  (\ref{eq:A}). Rewrite  (\ref{eq:A}) as
\begin{equation}\label{eq:A2}
A=(\widetilde{X}_1, \ldots, \widetilde{X}_{\lceil m/k \rceil})\left( \begin{array}{ccc}
\widetilde{A}_{11}  & \ldots &  \widetilde{A}_{1,\lceil n/k \rceil} \\
\vdots & \ddots & \vdots\\
\widetilde{A}_{\lceil m/k \rceil, 1} & \ldots &  \widetilde{A}_{\lceil m/k \rceil,\lceil n/k \rceil}
\end{array} \right)
\left( \begin{array}{c} \widetilde{Y}_1^T \\ \vdots \\ \widetilde{Y}_{\lceil n/k \rceil}^T \end{array} \right).
\end{equation}
By applying Theorem \ref{thm:spreadSRIP} to every $\widetilde{X}_i$, $\widetilde{Y}_j$ and doing a union bound, we see that with probability at least $1-4m^{-1}$, we will have $\norm{Y_{j,C:}}=O(\lambda_Y^{-1})$, $\norm{Y_{j,C:}^+}=O(\lambda_Y)$, $\norm{X_{i,R:}}=O( \lambda_X^{-1})$, $\norm{X_{i,R:}^+}=O( \lambda_X)$ for all $i,j$.

The rest of the proof is \emph{deterministic}.

Suppose the SVD of $A_{RC}$ is $U \diag(\sigma) V^T$. Define another vector $\sigma'$ such that $\sigma'_i = -\sigma_i$ if $\sigma_i<\delta$, zero otherwise. Let $F=U \diag(\sigma') V^T$. The skeleton decomposition we return is $A_{:C} (A_{RC}+F)^+ A_{R:}$. The point of adding $F$ to $A_{RC}$ is to avoid ``inverting'' singular values that are less than $\delta$.

Define the $\l\times \l$ matrix $E$ such that $E_{RC}=F$ and all other entries are zeros. Now, let $B=A+E$ so that $B_{RC}=A_{RC}+F$. The skeleton returned is $A_{C:} B_{RC}^+ A_{R:}$. By construction, $\norm{A-B}\leq \delta$ and $\norm{B_{RC}^+}\leq \delta^{-1}$.

Our objective is to bound $\norm{A-A_{:C} B_{RC}^+ A_{R:}}$. Split this up by a perturbation argument.
\begin{align}\label{eq:firststep}
\norm{A -A_{:C} B_{RC}^+ A_{R:}} \leq & \norm{A-B}+\norm{B-B_{:C} B_{RC}^+ B_{R:}} + \nonumber\\
& \norm{B_{:C} B_{RC}^+ B_{R:} - A_{:C} B_{RC}^+ B_{R:}}+\norm{A_{:C} B_{RC}^+ B_{R:}-A_{:C} B_{RC}^+ A_{R:}}\nonumber\\
\leq&  \delta + \norm{B-B_{:C} B_{RC}^+ B_{R:}}+\nonumber \\ 
&\norm{(B-A)S_C}\norm{ B_{RC}^+ B_{R:}}+\norm{A_{:C}B_{RC}^+}\norm{S_R^T(B-A)}\nonumber\\
\leq&  \delta + \norm{B-B_{:C} B_{RC}^+ B_{R:}} + \nonumber\\
&\delta \norm{B_{RC}^+ B_{R:}} + ( \norm{B_{:C} B_{RC}^+}+\norm{A_{:C} B_{RC}^+ -B_{:C} B_{RC}^+} )\delta\nonumber\\
\leq& \delta + \norm{B-B_{:C} B_{RC}^+ B_{R:}}+\nonumber \\
&\delta  \norm{B_{RC}^+ B_{R:}}+ \delta \norm{B_{:C} B_{RC}^+}+ \norm{(A-B)S_C} \delta^{-1} \delta \nonumber \\
\leq&  2\delta + \norm{B-B_{:C} B_{RC}^+ B_{R:}}+ \delta \norm{B_{RC}^+ B_{R:}}+\delta \norm{B_{:C} B_{RC}^+}
\end{align}

It remains to bound $\norm{B-B_{:C} B_{RC}^+ B_{R:}},\norm{B_{RC}^+ B_{R:}},\norm{B_{:C} B_{RC}^+}$ using the second principle. Intuitively, $\norm{B_{RC}^+ B_{R:}}$ cannot be too much bigger than $\norm{B_{RC}^+ B_{RC}}\leq 1$.

By Lemma \ref{thm:lift},
\begin{align*}
\norm{B_{RC}^+ B_{R:}} \leq &\norm{Y_{1,C:}^+} \norm{B_{RC}^+ B_{RC}}+ \norm{Y_{1,C:}^+}\norm{B_{RC}^+ B_{R:} Y_2 Y_{2,C:}^T} + \norm{B_{RC}^+ B_{R:} Y_2}\\
\leq  & \norm{Y_{1,C:}^+}+ \norm{Y_{1,C:}^+}\norm{B_{RC}^+}( \norm{(B_{R:}  -A_{R:})Y_2 Y_{2,C:}^T }+ \norm{A_{R:}Y_2 Y_{2,C:}^T }) +\\
& \norm{B_{RC}^+}(\norm{ (B_{R:}-A_{R:}) Y_2}+\norm{A_{R:} Y_2})\\
\leq &   \norm{Y_{1,C:}^+}+ \norm{Y_{1,C:}^+}\delta^{-1}(\delta+\norm{A_{R:}Y_2 Y_{2,C:}^T }) + \delta^{-1}(\delta+\norm{A_{R:}Y_2 })\\
\leq &  1+2\norm{Y_{1,C:}^+}+ \norm{Y_{1,C:}^+}\delta^{-1}\norm{A_{R:}Y_2 Y_{2,C:}^T }+\delta^{-1}\norm{A_{R:}Y_2 }. 
\end{align*}
By the first principle, the following holds whp:
\begin{equation}\label{eq:BRCBR}
\norm{B_{RC}^+ B_{R:}} =O(\lambda_Y+ \lambda_Y \delta^{-1} \norm{A_{R:} Y_2 Y_{2,C:}^T} +\delta^{-1} \norm{A_{R:} Y_2}).
\end{equation}
The same argument works for $\norm{B_{:C} B_{RC}^+}$. Whp,
\begin{equation}\label{eq:BCBRC}
\norm{B_{:C} B_{RC}^+ }=O(\lambda_X+ \lambda_X \delta^{-1} \norm{X_{2,R:} X_2^T A_{:C}}+ \delta^{-1} \norm{X_2^T A_{:C}})
\end{equation}

Bounding $\norm{B-B_{:C} B_{RC}^+ B_{R:}}$ requires more work, but the basic ideas are the same. Recall that the second principle suggests that $\norm{B-B_{:C} B_{RC}^+ B_{R:}}$ cannot be too much bigger than $\norm{B_{RC}-B_{RC} B_{RC}^+ B_{RC}}=0$. Applying Lemma \ref{thm:liftRC} with $A$ being $B-B_{:C} B_{RC}^+ B_{R:}$ yields
\begin{align*}
\norm{B-B_{:C} B_{RC}^+ B_{R:}}\leq & \norm{X_{1,R:}^+} \norm{Y_{1,C:}^+} \norm{X_{2,R:} X_2^T (B-B_{:C} B_{RC}^+ B_{R:}) Y_1 Y_{1,C:}^T}+\\
&\norm{X_{1,R:}^+} \norm{Y_{1,C:}^+} \norm{X_{1,R:} X_1^T (B-B_{:C} B_{RC}^+ B_{R:}) Y_2 Y_{2,:C}^T}+\\
&\norm{X_{1,R:}^+} \norm{Y_{1,C:}^+}\norm{X_{2,R:} X_2^T (B-B_{:C} B_{RC}^+ B_{R:})Y_2 Y_{2,C:}^T}+\\
&\norm{X_{1,R:}^+}\norm{X_{1,R:} X_1^T (B-B_{:C} B_{RC}^+ B_{R:}) Y_2}+\\
& \norm{Y_{1,C:}^+}\norm{X_2^T (B-B_{:C} B_{RC}^+ B_{R:})Y_1 Y_{1,C:}^T}+\\
&\norm{X_2^T (B-B_{:C} B_{RC}^+ B_{R:}) Y_2}
\end{align*}
which is bounded by
\begin{align*}&\norm{X_{1,R:}^+} \norm{Y_{1,C:}^+}\norm{Y_{1,C:}}(\norm{X_{2,R:} X_2^T B}+ \norm{X_{2,R:} X_2^T B_{:C}}\norm{B_{RC}^+ B_{R:}})+\\
& \norm{X_{1,R:}^+} \norm{Y_{1,C:}^+} \norm{X_{1,R:}}(\norm{B Y_2 Y_{2,C:}^T}+\norm{B_{R:} Y_2 Y_{2,C:}^T}\norm{B_{:C} B_{RC}^+})\\
& \norm{X_{1,R:}^+} \norm{Y_{1,C:}^+}(\norm{X_{2,R:} X_2^T B Y_2 Y_{2,C:}^T}+ \norm{X_{2,R:} X_2^T B_{:C}} \delta^{-1} \norm{B_{R:} Y_2 Y_{2,C:}^T})+\\
& \norm{X_{1,R:}^+} \norm{X_{1,R:}} (\norm{B Y_2}+ \norm{B_{:C} B_{RC}^+}\norm{B_{R:} Y_2})+\\
& \norm{Y_{1,C:}^+}\norm{Y_{1,C:}}(\norm{X_2^T B}+ \norm{B_{RC}^+ B_{R:}}\norm{X_2^T B_{:C}})+\\
& \norm{X_2^T B Y_2}+ \norm{X_2^T B_{:C}}\delta^{-1} \norm{B_{R:} Y_2}.
\end{align*}
We have paired $\norm{X_{1,R:}}$ with $\norm{X_{1,R:}^+}$, and $\norm{Y_{1,C:}}$ with $\norm{Y_{1,C:}^+}$ because the first principle 
implies that their products are $O(1)$ whp. That means
\begin{align*}\norm{B-B_{:C} B_{RC}^+ B_{R:}}= O(&\lambda_X(\norm{X_{2,R:} X_2^T B}+ \norm{X_{2,R:} X_2^T B_{:C}}\norm{B_{RC}^+ B_{R:}})+\\
&\lambda_Y (\norm{B Y_2 Y_{2,C:}^T}+\norm{B_{R:} Y_2 Y_{2,C:}^T}\norm{B_{:C} B_{RC}^+})+\\
& \lambda_X \lambda_Y (\norm{X_{2,R:} X_2^T B Y_2 Y_{2,C:}^T}+ \norm{X_{2,R:} X_2^T B_{:C}} \delta^{-1} \norm{B_{R:} Y_2 Y_{2,C:}^T})+\\
&\norm{B Y_2}+ \norm{B_{:C} B_{RC}^+}\norm{B_{R:} Y_2}+\\
& \norm{X_2^T B}+ \norm{B_{RC}^+ B_{R:}}\norm{X_2^T B_{:C}}+\\
&  \norm{X_2^T B_{:C}}\delta^{-1} \norm{B_{R:} Y_2}).
\end{align*}
We have dropped  $\norm{X_2^T B Y_2}$ because it is dominated by $\norm{X_2^T B}$.  (\ref{eq:BCBRC}) and  (\ref{eq:BRCBR}) can be used to control $\norm{B_{:C}B_{RC}^+}$ and $\norm{B_{RC}^+ B_{R:}}$. Before doing that, we want to replace $B$ with $A$ in all the other terms. This will introduce some extra $\delta$'s. For example, $\norm{X_{2,R:} X_2^T B}\leq \norm{X_{2,R:} X_2^T B - X_{2,R:} X_2^T A}+\norm{X_{2,R:} X_2^T A}\leq \delta+\norm{X_{2,R:} X_2^T A}$. Doing the same for other terms, we have that $\norm{B-B_{:C} B_{RC}^+ B_{R:}}$ is whp
\begin{align*} O(&\lambda_X(\delta+\norm{X_{2,R:} X_2^T A}+ (\delta+\norm{X_{2,R:} X_2^T A_{:C}})\norm{B_{RC}^+ B_{R:}})+\\
&\lambda_Y (\delta+\norm{A Y_2 Y_{2,C:}^T}+(\delta +\norm{A_{R:} Y_2 Y_{2,C:}^T})\norm{B_{:C} B_{RC}^+})+\\
& \lambda_X \lambda_Y (\delta+\norm{X_{2,R:} X_2^T A Y_2 Y_{2,C:}^T}+ \norm{X_{2,R:} X_2^T A_{:C}}+\\
& \norm{A_{R:} Y_2 Y_{2,C:}^T}+ \norm{X_{2,R:} X_2^T A_{:C}} \delta^{-1} \norm{A_{R:} Y_2 Y_{2,C:}^T})+\\
&\delta+\norm{A Y_2}+ (\delta +\norm{A_{R:} Y_2})\norm{B_{:C} B_{RC}^+}+\\
&\delta+\norm{X_2^T A}+ (\delta +\norm{X_2^T A_{:C}}) \norm{B_{RC}^+ B_{R:}}+\\
& \delta+ \norm{X_2^T A_{:C}}+\norm{A_{R:} Y_2}+\norm{X_2^T A_{:C}}\delta^{-1} \norm{A_{R:} Y_2}).
\end{align*}
Several terms can be dropped in the $O$ notation, for example $\delta\leq \lambda_X \delta \leq \lambda_X \lambda_Y \delta$ and $\norm{X_{2}^T A_{:C}}\leq \norm{X_2^T A}$. We shall also plug in the estimates on $\norm{B_{:C}B_{RC}^+}$ and $\norm{B_{RC}^+ B_{R:}}$,  from  (\ref{eq:BCBRC}) and  (\ref{eq:BRCBR}). This leads to
\begin{align*} O(&\lambda_X(\norm{X_{2,R:} X_2^T A}+ (\delta+\norm{X_{2,R:} X_2^T A_{:C}})(\lambda_Y+\lambda_Y \delta^{-1} \norm{A_{R:} Y_2 Y_{2,C:}^T}+\delta^{-1} \norm{A_{R:} Y_2}))+\\
&\lambda_Y (\norm{A Y_2 Y_{2,C:}^T}+(\delta +\norm{A_{R:} Y_2 Y_{2,C:}^T})(\lambda_X+\lambda_X \delta^{-1} \norm{X_{2,R:}X_2^T A_{:C}}+\delta^{-1} \norm{X_2^T A_{:C}}))+\\
& \lambda_X \lambda_Y (\delta+\norm{X_{2,R:} X_2^T A Y_2 Y_{2,C:}^T}+ \norm{X_{2,R:} X_2^T A_{:C}}+\\
& \norm{A_{R:} Y_2 Y_{2,C:}^T}+ \norm{X_{2,R:} X_2^T A_{:C}} \delta^{-1} \norm{A_{R:} Y_2 Y_{2,C:}^T})+\\
&\norm{A Y_2}+ (\delta +\norm{A_{R:} Y_2})(\lambda_X+\lambda_X \delta^{-1} \norm{X_{2,R:}X_2^T A_{:C}}+\delta^{-1} \norm{X_2^T A_{:C}})+\\
&\norm{X_2^T A}+ (\delta +\norm{X_2^T A_{:C}}) (\lambda_Y+\lambda_Y \delta^{-1} \norm{A_{R:} Y_2 Y_{2,C:}^T}+\delta^{-1} \norm{A_{R:} Y_2})+\\
& \norm{X_2^T A_{:C}}\delta^{-1} \norm{A_{R:} Y_2}).
\end{align*}
Collect the terms by their $\lambda_X, \lambda_Y$ factors and drop the smaller terms to obtain that whp,  is
\begin{align}\label{eq:BCBRCBR}
\norm{B-B_{:C} B_{RC}^+ B_{R:}}=O(&\lambda_X(\norm{X_{2,R:} X_2^T A}+ \norm{A_{R:} Y_2}+\delta^{-1}\norm{X_{2,R:}X_2^T A_{:C}}\norm{A_{R:} Y_2} )+\nonumber\\
& \lambda_Y(\norm{A Y_2 Y_{2,C:}^T}+ \norm{X_2^T A_{:C}}+\delta^{-1}\norm{A_{R:} Y_2 Y_{2,C:}^T}\norm{X_2^T A_{:C}} )+\nonumber\\
& \lambda_X \lambda_Y(\delta+  \norm{X_{2,R:}X_2^T A_{:C}}+\norm{A_{R:} Y_2 Y_{2,C:}^T}+\nonumber\\
&\delta^{-1}\norm{X_{2,R:}X_2^T A_{:C}}\norm{A_{R:} Y_2 Y_{2,C:}^T}+\norm{X_{2,R:}X_2^T A Y_2 Y_{2,C:}^T})+\nonumber\\
&\norm{X_2^T A}+\norm{A Y_2}+\delta^{-1} \norm{X_2^T A_{:C}}\norm{A_{R:}Y_2}).
\end{align}
We now have control over all three terms $\norm{B-B_{:C} B_{RC}^+ B_{R:}},\norm{B_{RC}^+ B_{R:}},\norm{B_{:C} B_{RC}^+}$. Substitute (\ref{eq:BRCBR}), (\ref{eq:BCBRC}), (\ref{eq:BCBRCBR}) into  (\ref{eq:firststep}). As the RHS of  (\ref{eq:BCBRCBR}) dominates $\delta$ multiplied by the RHS of  (\ref{eq:BRCBR}), (\ref{eq:BCBRC}), we conclude that whp, $\norm{A-A_{:C} B_{RC}^+ A_{R:}}$ is also bounded by the RHS of  (\ref{eq:BCBRCBR}).

To obtain the basic bound,  (\ref{eq:basicbound}), we note that all the ``normed terms'' on the RHS of (\ref{eq:BCBRCBR}), e.g., $\norm{A_{R:} Y_2 Y_{2,C}^T}$ and $\norm{X_2^T A}$, are bounded by $\eps_k$. It follows that whp,  $\norm{A-A_{:C} B_{RC}^+ A_{R:}}=O(\lambda_X \lambda_Y (\delta+\eps+\delta^{-1} \eps^2))$.

To obtain the other bound,  (\ref{eq:extrabound}), we need to bound each ``normed term'' of (\ref{eq:BCBRCBR}) differently.

Recall (\ref{eq:A2}). Consider $\norm{X_{2,R:} X_2^T A_{:C}}$. See that
$$X_{2,R:} X_2^T A_{:C} =(\widetilde{X}_{2,R:}, \ldots, \widetilde{X}_{\lceil m/k \rceil,R:}) \left( \begin{array}{ccc}
\widetilde{A}_{21}  & \ldots &  \widetilde{A}_{2,\lceil n/k \rceil} \\
\vdots & \ddots & \vdots\\
\widetilde{A}_{\lceil m/k \rceil, 1} & \ldots &  \widetilde{A}_{\lceil m/k \rceil,\lceil n/k \rceil}\end{array} \right)
\left( \begin{array}{c} \widetilde{Y}_{1,C:}^T \\ \vdots \\ \widetilde{Y}_{\lceil n/k \rceil,C:}^T \end{array} \right).$$
Recall that at the beginning of the subsection, we show that whp, $\norm{\widetilde{X}_{i,R:}}=O(\lambda_X^{-1})$ and $\norm{\widetilde{Y}_{j,C:}}=O(\lambda_Y^{-1})$ for all $i,j$. Recall the definition of $\eps'_k$ in  (\ref{eq:eps1}). It follows that whp,
$$\norm{X_{2,R:} X_2^T A_{:C}}\leq \sum_{i=2}^{\lceil m/k \rceil}\sum_{j=1}^{\lceil n/k \rceil} \norm{\widetilde{X}_{i,R:}}\norm{\widetilde{A}_{ij}}\norm{\widetilde{Y}_{j,C:}} \leq \lambda_X^{-1}\lambda_{Y}^{-1} \eps'_k.$$
Apply the same argument to other terms on the RHS of  (\ref{eq:BCBRCBR}), e.g., $\norm{X_{2,R:} X_2^T A Y_2 Y_{2,C:}^T} =O(\lambda_X^{-1} \lambda_Y^{-1} \eps'_k)$ and $\norm{X_2^T A_{:C}}=O(\lambda_Y^{-1} \eps'_k)$ whp. Mnemonically, a $R$ in the subscript leads to a $\lambda_X^{-1}$ and a $C$ in the subscript leads to a $\lambda_Y^{-1}$.

Recall that $\norm{A_{:C} B_{RC}^+ A_{R:}}$ is bounded by the RHS of  (\ref{eq:BCBRCBR}). Upon simplifying, we obtain that $\norm{A-A_{:C} B_{RC}^+ A_{R:}} = O(\lambda_X \lambda_Y \delta +\eps'_k + \lambda_X \lambda_Y \delta^{-1} {\eps'_k}^2)$, that is (\ref{eq:extrabound}). The proof is complete.

\section{Two other algorithms}\label{sec:extend}

\subsection{Second algorithm}
Algorithm \ref{alg:fullspread-slow} returns a skeleton with exactly $k$ rows and columns. First, randomly select $\widetilde{O}(k)$ rows and columns, then trim down to $k$ columns and $k$ rows by performing RRQR on $A_{:C}$ and $A_{R:}^T$. For dense matrices, the most expensive step is the multiplication of $A$ by $A_{R':}^+$. However, for structured matrices, the most expensive step of Algorithm \ref{alg:fullspread-slow} may be the RRQR factorization of $A_{:C}$ and $A_{R:}^T$ and the inversion of $A_{:C'},A_{R':}$, which take $\widetilde{O}(m k^2)$ time. The overall running time is $O(T_A k)+\widetilde{O}(m k^2)$ where $T_A$ is the cost of matrix-vector multiplication, but for convenience, we refer to this as the $\widetilde{O}(mnk)$ algorithm.

\begin{figure}[t] \begin{center}\medskip\fbox {\parbox{0.9\linewidth}{
\begin{algorithm}\label{alg:fullspread-slow}
\textup{ $\widetilde{O}(mnk)$ algorithm \\
\underline{Input}: Matrix $A\in \cplexes^{m\times n}$.\\
\underline{Output}: A skeleton representation with \emph{exactly} $k$ rows and columns.\\
\underline{Steps}:
\begin{enumerate}
\item Uniformly and independently select $\l$ columns to form $A_{:C}$.
\item Uniformly and independently select $\l$ rows to form $A_{R:}$.
\item Run RRQR on $A_{:C}$ to select $k$ columns and form $A_{:C'}$. This takes $\widetilde{O}(m k^2)$ time and $\widetilde{O}(m k)$ space.
\item Run RRQR on $A_{R:}^T$ to select $k$ rows and form $A_{R':}$. This takes $\widetilde{O}(n k^2)$ time and $\widetilde{O}(nk)$ space.
\item Compute $Z=A_{:C'}^+ (A A_{R':}^+)$. This takes $O(T_A k + m k^2)$ time and $O(m k)$ space, where $T_A$ is the time needed to apply $A$ to a vector.
\item Output $A_{:C'} Z A_{R':}$.
\end{enumerate}
} \end{algorithm}
}}\medskip \end{center} 
\end{figure}

It can be shown that once $A_{:C'},A_{R':}$ are fixed, the choice of $Z=A_{:C'}^+ A A_{R':}^+$ is optimal in the Frobenius norm (but not in the $2$-norm), that is $Z=\arg_{W\in \cplexes^{\l\times \l}} \norm{A-A_{:C'} W A_{R':}}_F$. Not surprisingly, we have a better error estimate.

\begin{theorem}\label{thm:fullspread-slow}
Let $A$ be given by  (\ref{eq:A}) for some $k > 0$. Assume $m\geq n$ and $X_1,Y_1$ are $\mu$-coherent. Recall the definitions of $\eps_k, \eps'_k$ in  (\ref{eq:eps}) and (\ref{eq:eps1}). Let $\l \geq 10\mu k \log m$. With probability at least $1-4km^{-2}$, Algorithm \ref{alg:fullspread-slow} returns a skeleton that satisfies
$$\norm{A-A_{:C'} Z A_{R':}} =O(\eps_k (m k)^{1/2}).$$
\end{theorem}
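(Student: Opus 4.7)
The plan is to follow the template of Section \ref{sec:theory}: split the skeleton error into two one-sided projection errors, then apply Corollary \ref{thm:liftPA} at the level of the \emph{original} uniformly-sampled index sets $C,R$ (not $C',R'$), and let the RRQR guarantee absorb the reduction from $\ell$ to $k$.

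Since $Z = A_{:C'}^+ A A_{R':}^+$, the returned skeleton is $P_L A P_R$, where $P_L := A_{:C'} A_{:C'}^+$ and $P_R := A_{R':}^+ A_{R':}$ are orthogonal projections. Writing $A - P_L A P_R = (I-P_L)A + P_L(A-AP_R)$ and using $\norm{P_L}\leq 1$,
$$\norm{A - A_{:C'} Z A_{R':}} \leq \norm{A - A_{:C'} A_{:C'}^+ A} + \norm{A - A A_{R':}^+ A_{R':}},$$
so it suffices to bound each one-sided error; by symmetry I treat only the column side.

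Apply Corollary \ref{thm:liftPA} with index set $C$ and projector $P := A_{:C'} A_{:C'}^+$:
$$\norm{A - A_{:C'} A_{:C'}^+ A} \leq \norm{Y_{1,C:}^+}\,\norm{A_{:C} - A_{:C'} A_{:C'}^+ A_{:C}} + \norm{Y_{1,C:}^+}\, \norm{A Y_2 Y_{2,C:}^T} + \norm{A Y_2}.$$
By the first principle (Theorem \ref{thm:spreadSRIP}), $\norm{Y_{1,C:}^+} = O(\sqrt{n/\ell})$ whp. Since $\norm{Y_{2,C:}} \leq \norm{Y_2} = 1$ and $\norm{A Y_2}\leq \eps_k$, the last two contributions are $O(\eps_k \sqrt{n/\ell})$ and $O(\eps_k)$. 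The middle factor is handled by strong RRQR (Gu--Eisenstat type) applied to $A_{:C}$:
$$\norm{A_{:C} - A_{:C'} A_{:C'}^+ A_{:C}}_2 \leq O(\sqrt{k\ell})\,\sigma_{k+1}(A_{:C}).$$
The rank-$k$ approximant $X_1 A_{11} Y_{1,C:}^T$ bounds $\sigma_{k+1}(A_{:C})$: writing the difference $A_{:C} - X_1 A_{11} Y_{1,C:}^T$ as $X_1 A_{12} Y_{2,C:}^T + X_2 A_{21} Y_{1,C:}^T + X_2 A_{22} Y_{2,C:}^T$, each summand has norm at most $\eps_k \cdot \max(\norm{Y_{1,C:}},\norm{Y_{2,C:}}) \leq \eps_k$, so $\sigma_{k+1}(A_{:C}) = O(\eps_k)$ and the middle factor is $O(\sqrt{k\ell}\,\eps_k)$. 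Combining,
$$\norm{A - A_{:C'} A_{:C'}^+ A} = O\bigl(\sqrt{n/\ell}\cdot\sqrt{k\ell}\,\eps_k\bigr) = O(\eps_k\sqrt{kn}).$$
The row-side estimate, using the row form of Corollary \ref{thm:liftPA}, RRQR on $A_{R:}^T$, and $\norm{X_{1,R:}^+} = O(\sqrt{m/\ell})$ from the first principle, gives $O(\eps_k\sqrt{km})$. Since $m \geq n$, the sum is $O(\eps_k\sqrt{mk})$, as claimed.

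The main conceptual obstacle is realizing that Corollary \ref{thm:liftPA} should be invoked at the original index set $C$ with the \emph{reduced} projector $P = A_{:C'}A_{:C'}^+$, rather than at $C'$. Applying it at $C'$ would force us to bound $\norm{Y_{1,C':}^+}$ for the $k\times k$ block produced by RRQR, which is not directly controlled by the uniform-sampling first principle and would require a delicate perturbation argument transferring RRQR's guarantee for $A_{:C'}$ to the unobserved $Y_{1,C':}$. The present arrangement instead lets the first principle supply the well-conditioned $\norm{Y_{1,C:}^+}$ while the RRQR singular-value bound on $R_{22}$ supplies everything else, with no intermediate perturbation bookkeeping.
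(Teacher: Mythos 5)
Your proposal is correct and follows essentially the same route as the paper's proof: split $\norm{A-P_LAP_R}$ into the two one-sided projection errors, apply Corollary \ref{thm:liftPA} at the original sampled set $C$ (resp.\ $R$) with the reduced projector $A_{:C'}A_{:C'}^+$ (resp.\ $A_{R':}^+A_{R':}$), bound $\norm{Y_{1,C:}^+}$, $\norm{X_{1,R:}^+}$ by the first principle, and let strong RRQR contribute the $f(k,\l)=O(\sqrt{k\l})$ factor. The only cosmetic difference is that you bound $\sigma_{k+1}(A_{:C})=O(\eps_k)$ by exhibiting the rank-$k$ approximant $X_1A_{11}Y_{1,C:}^T$ directly, whereas the paper uses singular-value interlacing via $\sigma_{k+1}(A_{:C})\leq\sigma_{k+1}(A)\leq\eps_k$; both give the same estimate.
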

\begin{proof}
Let $P=A_{:C'} A_{:C'}^+ \in \cplexes^{m \times m}$. RRQR \cite{gu1996efficient} selects $k$ columns from $A_{:C}$ such that
\begin{equation*}\label{eq:RRQRcond1}
\norm{A_{:C}- P A_{:C}} \leq f(k,\l)\sigma_{k+1}(A_{:C}) \leq  f(k,\l) \sigma_{k+1}(A) \leq f(k,\l) \eps_k,
\end{equation*}
where $f(k,\l):=\sqrt{1+2k(\l-k)}$. We have used the fact that $\sigma_{k+1}(A) = \sigma_{k+1}\twotwo{A_{11}}{A_{12}}{A_{21}}{A_{22}}\leq \sigma_{1} \binom{A_{12}}{A_{22}}\leq \eps_k$.
See interlacing theorems \cite[Corollary 3.1.3]{horn1994topics}.

Recall from (\ref{eq:firstprincipleest}) that $\norm{Y_{1,C:}^+} =O((n/\l)^{1/2})$ with probability at least $1-2km^{-2}$. Apply Corollary \ref{thm:liftPA} to obtain that whp 
\begin{align*}
\norm{A - P A}&\leq O(\lambda_Y) \norm{A_{:C}-P A_{:C}} + O(\lambda_Y) \eps_k+ \eps_k\\
& =O( \eps_k (n/\l)^{1/2} f(k,\l))=O(\eps_k (nk)^{1/2}).
\end{align*}
Let $P'=A_{R':}^+ A_{R':}$. By the same argument, $\norm{A-A P'}=O(\eps_k(mk)^{1/2})$ with the same failure probability. Combine both estimates. With probability at least $1-4km^{-2}$,
\begin{align*}
\norm{A-A_{:C'} A_{:C'}^+ A A_{R':}^+ A_{R':}} &=\norm{A-P A P'}\\
&\leq \norm{A-PA}+\norm{PA-PAP'}\\
&\leq \norm{A-PA}+\norm{A-AP'}\\
&=O(\eps_k (mk)^{1/2}).
\end{align*}
\end{proof}

Many algorithms that use the skeleton $A_{:C} (A_{:C}^+ A A_{R:}^+) A_{R:}$, e.g. \cite{mahoney2009cur}, seek to select columns such that $\norm{A-A_{:C} A_{:C}^+ A}$ is small. Here, we further select $k$ out of $\l=\widetilde{O}(k)$ columns, which is also suggested in \cite{boutsidis2009improved}. Their $2$-norm error estimate is $O(k \log^{1/2} k) \eps_k + O(k^{3/4} \log^{1/4} k) \|A-A_k\|_F$ where $A_k$ is the optimal rank $k$ approximation to $A$.  In general $\norm{A-A_k}_F \leq (n-k)^{1/2}\eps_k$, so our bound is a factor $k^{1/4}$ better. Our proof is also more straightforward. Nevertheless, we make the extra assumption that $X_1,Y_1$ are incoherent.

\subsection{Third algorithm}
Consider the case where \emph{only} $X_1$ is $\widetilde{O}(1)$-coherent. See Algorithm \ref{alg:partialspread-nk2}. It computes a skeleton with $\widetilde{O}(k)$ rows and $k$ columns in $\widetilde{O}(nk^2+k^3)$ time. Intuitively, the algorithm works as follows. We want to select $k$ columns of $A$ but running RRQR on $A$ is too expensive. Instead, we randomly choose $\widetilde{O}(k)$ rows to form $A_{R:}$, and select our $k$ columns using the much smaller matrix $A_{R:}$. This works because $X_1$ is assumed to be $\widetilde{O}(1)$-coherent and choosing almost any $\widetilde{O}(k)$ rows will give us a good sketch of $A$.

\begin{figure}[t] \begin{center}\medskip\fbox {\parbox{0.9\linewidth}{
\begin{algorithm}\label{alg:partialspread-nk2}
\textup{\underline{$\widetilde{O}(nk^2)$ algorithm}\\
\underline{Input}: Matrix $A\in \cplexes^{m\times n}$.\\
\underline{Output}: A skeleton representation with $\l=\widetilde{O}(k)$ rows and $k$ columns.\\
\underline{Steps}:
\begin{enumerate}
\item Uniformly and independently select $\l$ rows to form $A_{R:}$.
\item Run RRQR on $A_{R:}$. Obtain $A_{R:} \simeq A_{RC'}(A_{RC'}^+ A_{R:})$  where $A_{RC'}$ contains $k$ columns of $A_{R:}$. This takes $\widetilde{O}(n k^2)$ time and $\widetilde{O}(nk)$ space.
\item Compute $Z=A_{RC'}^+$. This takes $\widetilde{O}(k^3)$ time and $\widetilde{O}(k^2)$ space.
\item Output $A_{:C'} Z A_{R:}$.
\end{enumerate}
}\end{algorithm}
}}\medskip \end{center}
\end{figure}

\begin{theorem}\label{thm:partialspread-nk2}
Let $A$ be given by  (\ref{eq:A}) for some $k > 0$. Assume $m\geq n$ and $Y_1$ is $\mu$-coherent. (There is no assumption on $X_1$.) Recall the definition of $\eps_k$ in  (\ref{eq:eps}). Let $\l \geq 10\mu k \log m$. Then, with probability at least $1-2km^{-2}$, Algorithm \ref{alg:partialspread-nk2} returns a skeleton that satisfies
$$\norm{A-A_{:C'} Z A_{R:}} =O(\eps_k (mn)^{1/2}).$$
\end{theorem}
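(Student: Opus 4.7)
The plan is to follow the template of the proof of Theorem~\ref{thm:fullspread-slow}, but with the RRQR now applied to the $\l \times n$ matrix $A_{R:}$ rather than to $A_{:C}$. The Gu--Eisenstat bound delivers
\begin{equation*}
\norm{A_{R:} - A_{RC'} A_{RC'}^+ A_{R:}} \leq f(k, \l) \sigma_{k+1}(A_{R:}) \leq f(k, \l) \eps_k,
\end{equation*}
with $f(k,\l) = \sqrt{1 + 2k(\l-k)}$, using the interlacing inequality $\sigma_{k+1}(A_{R:}) \leq \sigma_{k+1}(A) \leq \eps_k$. Writing $\tilde A := A_{:C'} A_{RC'}^+ A_{R:}$, the construction forces $(A - \tilde A)_{:C'} = 0$ whenever $A_{RC'}$ has full column rank, since then $A_{RC'}^+ A_{RC'} = I_k$. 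So the residual $A - \tilde A$ vanishes exactly on the $C'$ columns and is small on the $R$ rows.

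The second step lifts the zero column-restriction to the full matrix via Lemma~\ref{thm:lift} applied to $A - \tilde A$ with column index set $C = C'$:
\begin{equation*}
\norm{A - \tilde A} \leq \norm{Y_{1,C':}^+} \cdot \norm{(A - \tilde A) Y_2 Y_{2,C':}^T} + \norm{(A - \tilde A) Y_2}.
\end{equation*}
Each residual on the right-hand side is bounded by $\eps_k (1 + \norm{A_{:C'} A_{RC'}^+})$ after expanding $\tilde A$ and applying the triangle inequality together with the three easy bounds $\norm{AY_2}, \norm{A_{R:} Y_2}, \norm{A Y_2 Y_{2,C':}^T} \leq \eps_k$. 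The task therefore reduces to controlling the two deterministic quantities $\norm{Y_{1,C':}^+}$ and $\norm{A_{:C'} A_{RC'}^+}$.

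The main obstacle is that $C'$ is chosen by RRQR rather than at random, so Theorem~\ref{thm:spreadSRIP} cannot be invoked directly on $Y_{1,C':}$. I would bound $\sigma_k(Y_{1,C':})$ structurally by combining the factorization $A_{RC'} = X_{1,R:} A_{11} Y_{1,C':}^T + E_{RC'}$ (with $\norm{E_{RC'}} \leq \eps_k$), the RRQR singular-value guarantee $\sigma_k(A_{RC'}) \geq \sigma_k(A_{R:})/f(k,\l)$, and the submultiplicative inequality $\sigma_k(A_{RC'}) \leq \norm{A_{11}} \sigma_k(Y_{1,C':}) + \eps_k$; the $\mu$-coherence of $Y_1$ (which bounds $\norm{Y_{1,C':}}_F \leq k\sqrt{\mu/n}$) then caps $\norm{Y_{1,C':}^+}$ by $O(\sqrt{n})$. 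For $\norm{A_{:C'} A_{RC'}^+}$, I would exploit the ideal-case identity $A_{:C'} A_{RC'}^+ = X_1 X_{1,R:}^+$ (valid when $A_{11}$, $Y_{1,C':}^T$ are invertible and $X_{1,R:}$ has full column rank) together with the worst-case bound $\norm{X_{1,R:}^+} = O(\sqrt{m})$, which holds on the full-rank event delivered with probability $1 - 2km^{-2}$ by the single application of Theorem~\ref{thm:spreadSRIP} to $Y_1$ (using $m \geq n$ to weaken the natural $n^{-2}$ tail to $m^{-2}$). Multiplying the two $\sqrt{n}$ and $\sqrt{m}$ factors yields the promised $O(\eps_k \sqrt{mn})$ estimate.
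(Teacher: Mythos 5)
Your first step (the RRQR bound on $\norm{A_{R:}-A_{RC'}A_{RC'}^+A_{R:}}$) is the right starting point, but the lift goes in the wrong direction, and this is a genuine gap. In Algorithm \ref{alg:partialspread-nk2} the randomness is in the rows $R$, while $C'$ is chosen deterministically by RRQR from $A_{R:}$; so the only restriction you can hope to undo probabilistically is the row restriction. By lifting the column identity $(A-\tilde A)_{:C'}=0$ through Lemma \ref{thm:lift}, you are forced to control $\norm{Y_{1,C':}^+}$ for a $k\times k$, RRQR-selected block, together with $\norm{A_{:C'}A_{RC'}^+}$, and neither is controllable under the theorem's hypotheses. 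Your lower bound on $\sigma_k(Y_{1,C':})$ bottoms out at $\sigma_k(A_{R:})$, which the model (\ref{eq:A}) does not bound from below: $A_{11}$ may be ill-conditioned or rank-deficient, in which case $Y_{1,C':}$ can be singular and $A_{RC'}$ rank-deficient (so even $(A-\tilde A)_{:C'}=0$ fails), yet the theorem must still hold. Moreover, coherence of $Y_1$ only \emph{upper}-bounds the entries, hence the singular values, of $Y_{1,C':}$; it cannot cap $\norm{Y_{1,C':}^+}$ --- on the contrary it forces $\norm{Y_{1,C':}^+}\gtrsim (n/(\mu k))^{1/2}$ when $Y_{1,C':}$ is invertible. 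The identity $A_{:C'}A_{RC'}^+=X_1X_{1,R:}^+$ holds only when $\eps_k=0$ and everything is invertible, and applying Theorem \ref{thm:spreadSRIP} to $Y_1$ says nothing about $X_{1,R:}$: there is no unconditional ``worst-case'' bound $\norm{X_{1,R:}^+}=O(\sqrt m)$ (take $X_1=\binom{I_{k\times k}}{0}$, so that $X_{1,R:}=0$ with high probability). A minor slip: the RRQR factor for selecting $k$ of the $n$ columns of the $\ell\times n$ matrix $A_{R:}$ is $f(k,n)=\sqrt{1+2k(n-k)}$, not $f(k,\ell)$.

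The paper's proof spends the randomness exactly where it is available. Write the skeleton as $AP$ with $P=S_{C'}D$, $D=A_{RC'}^+A_{R:}$; strong RRQR gives both $\norm{A_{R:}-A_{R:}P}\leq f(k,n)\,\eps_k$ and $\norm{D}\leq f(k,n)$, hence $\norm{I-P}\leq 1+f(k,n)$. The transposed Corollary \ref{thm:liftPA}, i.e.\ (\ref{eq:corP2}), then bounds $\norm{A-AP}$ by $\norm{X_{1,R:}^+}\norm{A_{R:}-A_{R:}P}+\norm{X_{1,R:}^+}\norm{I-P}\norm{X_{2,R:}X_2^TA}+\norm{I-P}\norm{X_2^TA}$, where every normed factor besides $\norm{X_{1,R:}^+}$ and $\norm{I-P}$ is at most $f(k,n)\eps_k$ or $\eps_k$. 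The first principle applied to the randomly sampled rows gives $\norm{X_{1,R:}^+}=O((m/\ell)^{1/2})$ with probability at least $1-2km^{-2}$, and the product is $O(\eps_k f(k,n)(m/\ell)^{1/2})=O(\eps_k(mn)^{1/2})$; no control of $Y_{1,C':}^+$ or of $A_{:C'}A_{RC'}^+$ is needed anywhere. Note that this argument requires the coherence hypothesis on the factor whose index set is sampled at random --- the row factor --- which is what (\ref{eq:firstprincipleest}) is invoked for in the paper; the statement's parenthetical (coherence on $Y_1$, none on $X_1$) has the roles swapped relative to the proof and to the discussion introducing the algorithm, and your attempt to get $\norm{X_{1,R:}^+}$ for free with no assumption on the left factor cannot be repaired.
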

\begin{proof}
We perform RRQR on $A_{R:}$ to obtain $A_{R:} \simeq A_{RC'}D$ where $D=A_{R C'}^+ A_{R:} $ and $C'$ indexes the selected $k$ columns. We want to use the second principle to ``undo the row restriction'' and infer that $A\simeq A_{C'}D$, the output of Algorithm \ref{alg:partialspread-nk2}. We now fill in the details.

Strong RRQR \cite{gu1996efficient} guarantees that
$$\norm{A_{R:} - A_{R C'}D} \leq \sigma_{k+1}(A_{R:}) f(k,n)  \leq \sigma_{k+1}(A) f(k,n)  \leq \eps_k f(k,n) $$
and
$$\norm{D} \leq f(k,n)$$
where $f(k,n) = \sqrt{1+2k(n-k)}$.
Prepare to apply a transposed version of Corollary \ref{thm:liftPA}, that is 
\begin{equation}\label{eq:corP2}
\norm{A-AP} \leq \norm{X_{1,R:}^+}\norm{A_{R:}-A_{R:} P} + \norm{X_{1,R:}^+}\norm{I-P} \norm{X_{2,R:} X_2^T A} +\norm{I-P}\norm{X_2^T A}.
\end{equation}
Let $P=S_{C'} D $, so that $\norm{P}\leq \norm{D}\leq f(k,n)$. Note that $AP = A_{:C'} A_{R C'}^+ A_{R:}$. By (\ref{eq:firstprincipleest}), with probability at least $1-2km^{-2}$, $\norm{X_{1,R:}^+}=O((m/\l)^{1/2})$. By (\ref{eq:corP2}),
\begin{align*}
\norm{A -AP}&\leq O(\lambda_X) \norm{A_{R:}-A_{RC'}D }+O(\lambda_X)(1+\norm{P}) \eps_k + (1+\norm{P})\eps_k\\
& =O(\eps_k f(k,n) (m/\l)^{1/2}) =O( \eps_k(mn)^{1/2}).
\end{align*}
\end{proof}

If $X_1$ is not incoherent and we fix it by multiplying on the left by a randomized Fourier matrix $\mathcal{F} D$ (cf. Section \ref{sec:introsrft}), then we arrive at the algorithm in \cite{liberty2007randomized}. The linear algebraic part of their proof combined with the first principle will lead to the \emph{same bounds}. What we have done here is to decouple the proof into three simple parts: (1) show that $\widetilde{X}_1:=\mathcal{F} D X_1$ is incoherent, (2) use the first principle to show that $\widetilde{X}_{1,R:}$ is ``sufficiently nonsingular'', (3) use the second principle to finish up the linear algebra.

\subsection{Comparing the three algorithms}\label{sec:summary}
Here is a summary of the three algorithms studied in this paper. Assume $m\geq n$.
\begin{table}[h]
\begin{center}
\begin{tabular}{|c|c|c|c|c|c|} \hline
& No. rows & No. columns & Error in $2$-norm & Running time & Memory\\ \hline
Algorithm 1 & $\l=\widetilde{O}(k)$ & $\l=\widetilde{O}(k)$ & $O(\eps_k(mn)^{1/2}/\l) $ & $\widetilde{O}(k^3)$ & $\widetilde{O}(k^2)$\\
Algorithm 2 & $k$ & $k$ & $O(\eps_k(mk)^{1/2})$ & $O(T_A k)+\widetilde{O}(mk^2)$ & $\widetilde{O}(mk)$ \\
Algorithm 3 & $\l=\widetilde{O}(k)$ & $k$ & $O(\eps_k(mn)^{1/2})$ & $\widetilde{O}(nk^2)$ & $\widetilde{O}(nk)$ \\ \hline
\end{tabular}
\end{center}
\end{table}

Suppose $A$ has structure and $T_A \ll mn$. If we can tolerate a $m$ factor in the running time and memory usage, then Algorithm 2 is the method of choice. Otherwise, we recommend using Algorithm 1. It is much faster and even though Theorem \ref{thm:fullspread-k3} suggests that the error grows with $(mn)^{1/2}$, we believe that in practice, the error usually grows with $m^{1/2}$. 

Suppose $A$ is dense and has no structure. In this case, Algorithm 2 is too slow. As for Algorithm 3, it runs significantly slower than Algorithm 1 and its error bounds are not any better. However, if we do not want to worry about setting the parameter $\delta$, we will prefer to use Algorithm 3.

\section{Examples}\label{sec:numerical}
For this section, we consider only Algorithm  \ref{alg:fullspread-k3}, the $\widetilde{O}(k^3)$ algorithm.

\subsection{Toy example}\label{sec:toy}
Let $A=X \Sigma Y^T \in \cplexes^{n \times n}$ where $X,Y$ are unitary \emph{Fourier matrices} and $\Sigma$ is a diagonal matrix of singular values. Note that every entry of $X,Y$ is of magnitude $n^{-1/2}$ and are $1$-coherent.

For the first experiment, $A$ is $301\times 301$ and $\eps=\eps_{k}=\sigma_{k+1}=\ldots=\sigma_n=10^{-15}$. We also pick the first $k$ singular values to be logarithmically spaced between $1$ and $\eps_k$. In each trial, we randomly pick $\l=100$ rows and columns and measure $\norm{A-A_{:C} Z A_{R:}}$. The only parameters being varied are $k$ and $\delta$.

\begin{figure}[t]
\begin{center}\includegraphics{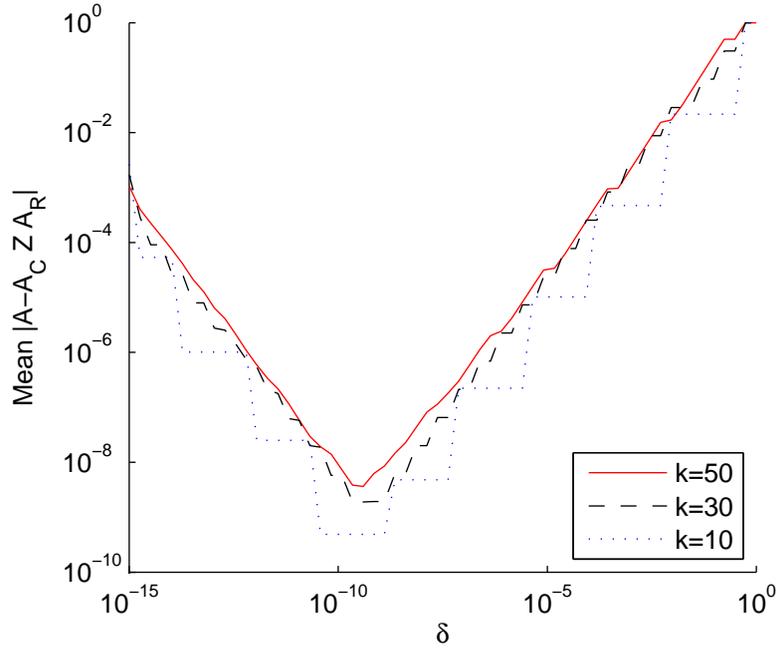}\end{center}
\caption{\label{fig:exp11_k3_delta_plot} Loglog plot of the empirical mean of the error (in $2$-norm) by the $\widetilde{O}(k^3)$ algorithm versus $\delta$, a regularization parameter. This relationship between the error and $\delta$ agrees with Theorem \ref{thm:fullspread-k3}. See  (\ref{eq:logV}). More importantly, the error blows up for small $\delta$, which implies that the regularization step is essential.}
\end{figure}

From  (\ref{eq:basicbound}) in Theorem \ref{thm:fullspread-k3}, we may expect that when variables such as $n,m,\l,k$ are fixed,
\begin{equation}\label{eq:logV}
\log \norm{A-A_{:C} Z A_{R:}} \propto \log (\delta^{-1}(\eps_k+\delta)^{2}) = -\log \delta + 2\log(\eps_k+\delta).
\end{equation}
Consider a plot of $\norm{A-A_{:C} Z A_{R:}}$ versus $\delta$ on a log-log scale. According to the above equation, when $\delta\ll \eps_k$, the first term dominates and we expect to see a line of slope $-1$, and when $\delta\gg \eps_k$, $\log (\eps_k + \delta) \simeq \log \delta $ and we expect to see a line of slope $+1$. Indeed, when we plot the experimental results in Figure \ref{fig:exp11_k3_delta_plot}, we see a right-angled $V$-curve.

A curious feature of Figure \ref{fig:exp11_k3_delta_plot} is that the error curves behaves like a staircase. As we decrease $k$, the number of different error levels decrease proportionally. A possible explanation for this behavior is that the top singular vectors of $A_{:C}$ match those of $A$, and as $\delta$ increases from $\sigma_i(A)$ to $\sigma_{i-1}(A)$ for some small $i$, smaller components will not be inverted and the error is all on the order of $\sigma_i(A)$.

For the second experiment, we use the same matrix but vary $n$ instead of $\delta$. We fix $k=9$, $\l=40$ and $\delta=\eps_{k}\l/n^{1/2}$. There are three different runs with $\eps_k=\eps=10^{-6},10^{-8},10^{-10}$.  The results are plotted in Figure \ref{fig:exp4_k3_plot}. They suggest that the error $\norm{A-A_{:C} Z A_{R:}}$ scales with $n^{1/2}$, which is better than (\ref{eq:basicconsequence}).

\begin{figure}[t]
\begin{center}\includegraphics{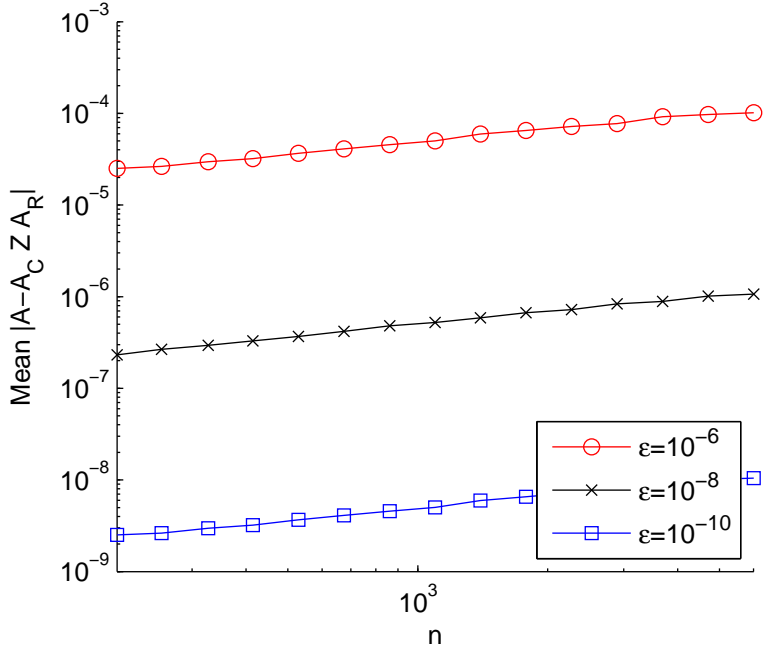}\end{center}
\caption{\label{fig:exp4_k3_plot} Loglog plot of the empirical mean of the error (in $2$-norm) by the $\widetilde{O}(k^3)$ algorithm versus $n$, the size of square matrix $A$. Here, $k,\l$ are fixed, $A=X_1 Y_1^T + \eps X_2 Y_2^T$ and $\eps_k=\eps_{k+1}=\ldots=\eps$. The parameter $\delta$ is set at $\eps(\l/n^{1/2})$. The error is roughly proportional to $n^{1/2}\eps$.}
\end{figure}

\subsection{Smooth kernel}\label{sec:smoothkernel}

Consider a 1D integral operator with a kernel $K$ that is analytic on $[-1,1]^2$. Define $A$ as $(A)_{ij}=c K(x_i,y_j)$ where the nodes $x_1,\ldots,x_n$ and $y_1,\ldots,y_n$ are \emph{uniformly spaced} in $[-1,1]$. First, suppose $K= \sum_{1\leq i,j\leq 6} c_{ij} T_i(x)T_j(y)+ 10^{-3} T_{10}(x) T_{10}(y)+ 10^{-9}N$ where $T_i(x)$ is the $i$-th Chebyshev polynomial and $N$ is the random Gaussian matrix, i.e., noise. The coefficients $c_{ij}$'s are chosen such that $\norm{A} \simeq 1$. Pick $n=m=10^3$ and slowly increase $\l$, the number of rows and columns sampled by the $\widetilde{O}(k^3)$ algorithm. As shown in Figure \ref{fig:exp50_plot}, the skeleton representation $A_{:C} Z A_{R:}$ converges rapidly to $A$ as we increase $\l$.

\begin{figure}[t]
\begin{center}\includegraphics{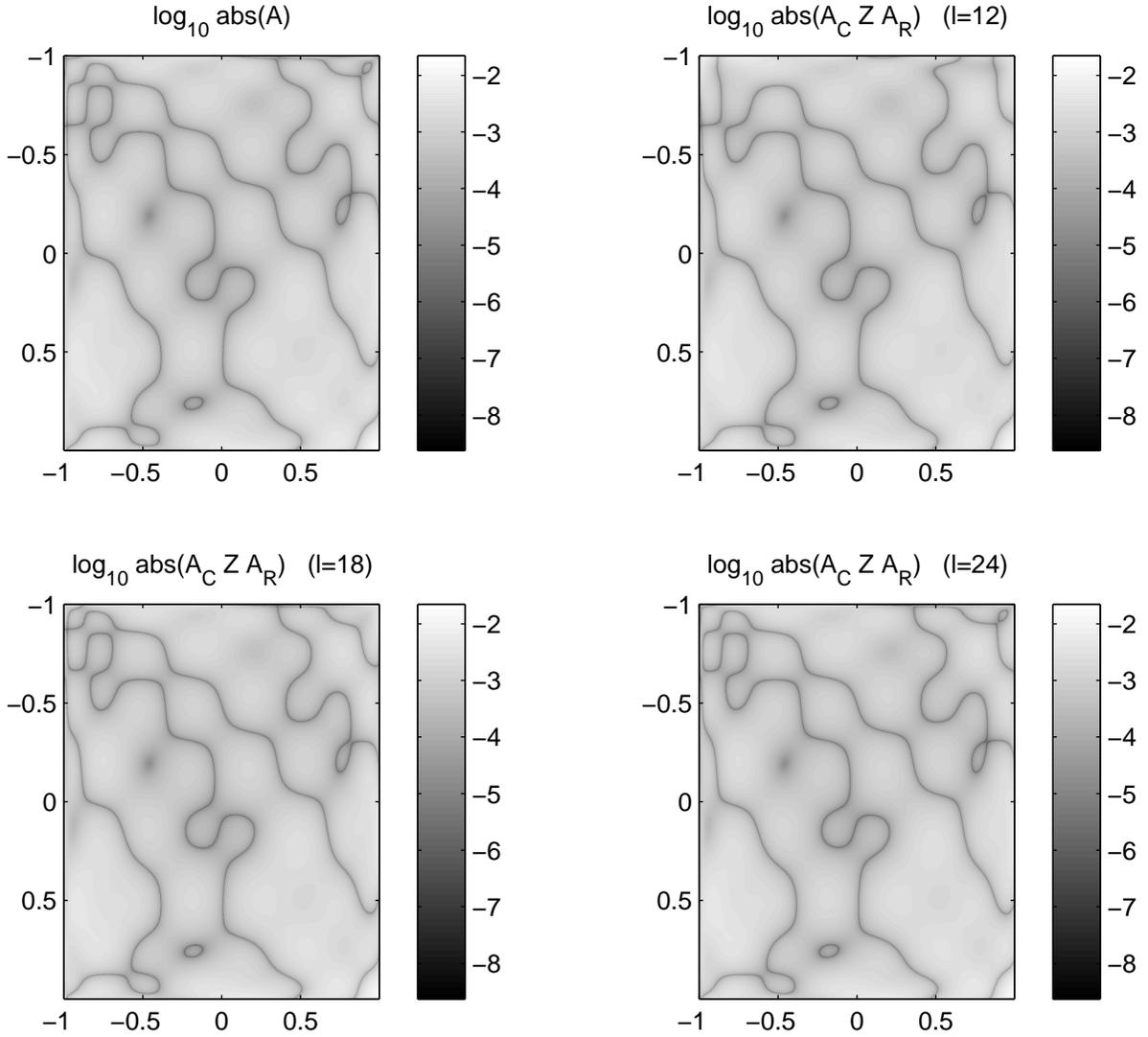}\end{center}
\caption{\label{fig:exp50_plot} $A$ is the smooth kernel $K(x,y)$ where $K$ is the sum of $6^2$ low degree Chebyshev polynomials evaluated on a $10^3 \times 10^3$ uniform grid. The topleft figure is $A$ while the other figures show that the more intricate features of $A$ start to appear as we increase $\l$ from $12$ to $18$ to $24$. Recall that we sample $\l$ rows and $\l$ columns in the $\widetilde{O}(k^3)$ algorithm.}
\end{figure}

Next, consider $K(x,y)=c\exp(xy)$. Let $n=900$ and pick $c$ to normalize $\norm{A} = 1$. We then plot the empirical mean of the error of the $\widetilde{O}(k^3)$ algorithm against $\l$ on the left of Figure \ref{fig:exp103_plot}. Notice that the error decreases exponentially with $\l$.

\begin{figure}[t]
\begin{center}\includegraphics{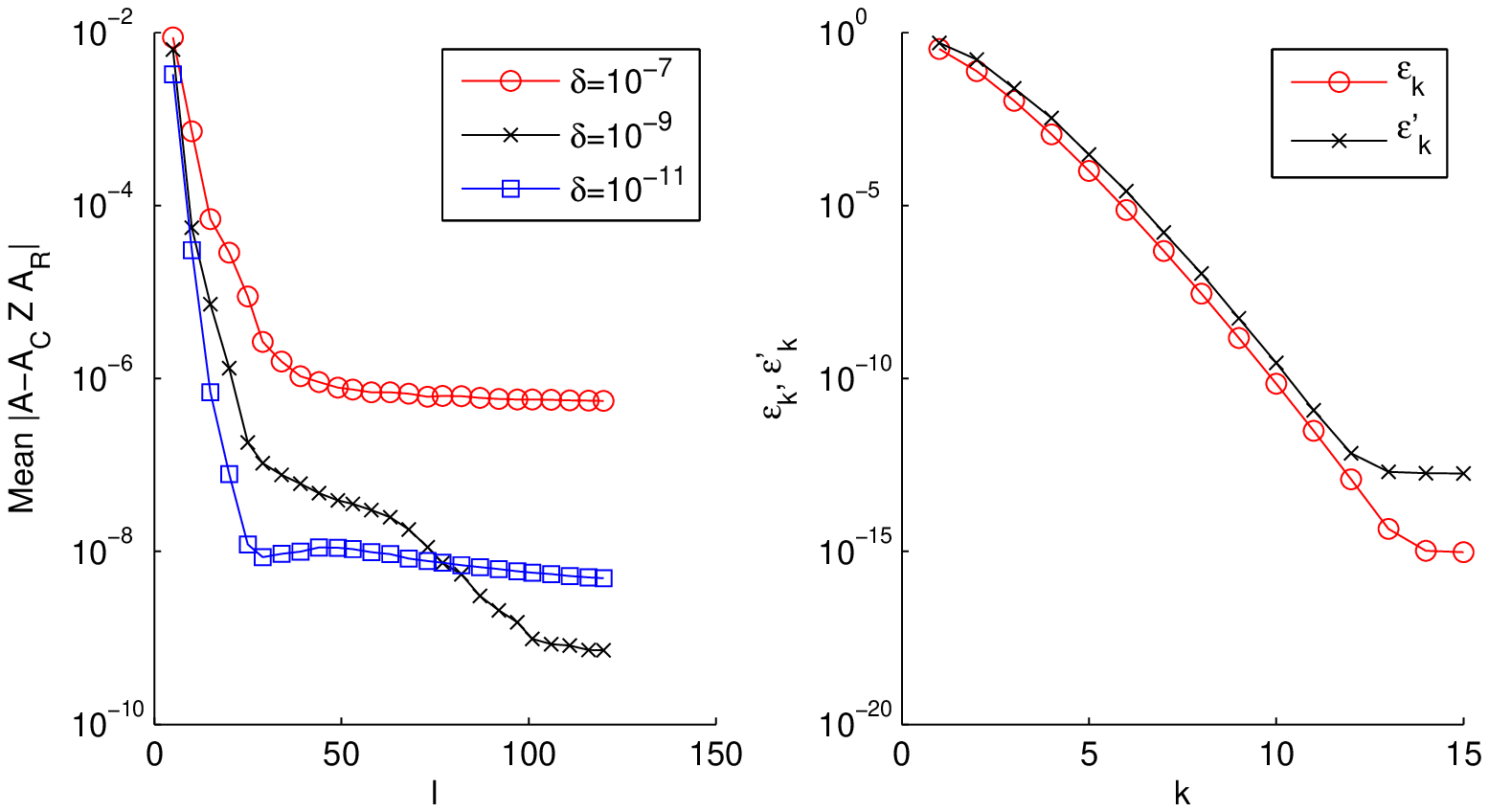}\end{center}
\caption{\label{fig:exp103_plot} $A$ is the smooth kernel $K(x,y)=\exp(-xy)$ sampled on a uniform grid. The graph on the left shows that the error of the $\widetilde{O}(k^3)$ algorithm decreases exponentially with $\l$, the number of sampled rows and columns. The figure on the right shows that if we expand $A$ in terms of Legendre polynomials, the coefficients (and therefore $\eps_k,\eps'_k$) decay exponentially. See  (\ref{eq:A}),  (\ref{eq:eps}) and  (\ref{eq:eps1}) for the definitions of $\eps_k$ and $\eps'_k$.}
\end{figure}

To understand what is happening, imagine that the grid is infinitely fine. Let $\vphi_1,\vphi_2,\ldots$ be {Legendre polynomials}. Recall that these polynomials are orthogonal on $[-1,1]$. Define the matrices $X,Y$ as $(X)_{ij}=\vphi_j(x_i)$ and $(Y)_{ij}=\vphi_j(y_i)$. Assume the $\vphi_i$'s are scaled such that $X,Y$ are unitary. It is well-known that if we expand $K$ in terms of Chebyshev polynomials or Legendre polynomials \cite{boyd2001chebyshev} or prolate spheroidal wave functions \cite{xiao2001prolate}, the expansion coefficients will decay exponentially. This means that the entries of $X^T A Y$ should decay exponentially away from the topleft corner and $\eps'_k = \Theta(\eps_k)$ (cf.  (\ref{eq:eps}) and  (\ref{eq:eps1})). We confirm this by plotting $\eps_k,\eps'_k$ versus $k$ on the right of Figure \ref{fig:exp103_plot}. The actual $X,Y$ used to obtain this plot are obtained by evaluating the Legendre polynomials on the uniform grid and orthonormalizing. It can be verified that the entries of $X,Y$ are of magnitude $O((k/n)^{1/2})$ which implies a coherence $\mu\simeq k$, independent of $n$. The implication is that the algorithm will continue to perform well as $n$ increases.

As $\l$ increases, we can apply Theorem \ref{thm:fullspread-k3} with a larger $k$. Since $\eps_k,\eps'_k$ decrease exponentially, the error should also decrease exponentially. However, as $k$ increases beyond  $\simeq 15$, $\eps_k$ stagnates and nothing can be gained from increasing $\l$. In general, as $\eps_k$ decreases, we should pick a smaller $\delta$. But when $k\gtrsim 15$, choosing a smaller $\delta$ does not help and may lead to worse results due to the instability of pseudoinverses and floating point errors. This is evident from Figure \ref{fig:exp103_plot}.

Platte, Trefethen, Kuijlaars \cite{platte2010impossibility} state that if we sample on a uniform grid, the error of {any stable} approximation scheme cannot decrease exponentially forever.  In this example, the random selection of columns and rows correspond to selecting interpolation points randomly from a uniform grid, and $\delta$ serves as a regularization parameter of our approximation scheme. The method is stable, but we can only expect an exponential decrease of the error up to a limit dependent on $\delta$.

\subsection{Fourier integral operators}
In \cite{candes:fio}, Candes et al. consider how to efficiently apply 2D Fourier integral operators of the form
$$Lf(x) = \int_{\xi} a(x,\xi) e^{2\pi i \Phi(x,\xi)} \hat{f}(\xi) d\xi$$
where $\hat{f}(\xi)$ is the Fourier transform of $f$, $a(x,\xi)$ is a smooth amplitude function and $\Phi$ is a smooth phase function that is homogeneous, i.e., $\Phi(x,\lambda \xi) = \lambda \Phi(x,\xi)$ for any $\lambda>0$. Say there are $N^2$ points in the space domain and also the frequency domain.

The main idea is to split the frequency domain into $\sqrt{N}$ wedges, Taylor expand $\Phi(x,\cdot)$ about $\abs{\xi} \hat{\xi}_j$ where $j$ indexes a wedge, and observe that the residual phase $\Phi_j(x,\xi):=\Phi(x,\xi)-\Phi(x,\abs{\xi}\hat{\xi}_j)\cdot \xi$ is nonoscillatory. Hence, the matrix $A_{st}^{(j)}: = \exp(2\pi i \Phi_j(x_s,\xi_t))$ can be approximated by a low rank matrix, i.e., $\exp(2\pi i \Phi_j(x,\xi))$ can be written as $\sum_{q=1}^{r} f_q(x) g_q(\xi)$ where $r$, the separation rank, is independent of $N$. By switching order of summations, the authors arrive at $\widetilde{O}(N^{2.5})$ algorithms for both the preprocessing and the evaluation steps. See \cite{candes:fio} for further details.

What we are concerned here is the approximate factorization of $A^{(j)}$. This is a $N^2$ by $N^{1.5}$ matrix since there are $N^2$ points in the space domain and $N^2/\sqrt{N}$ points in one wedge in the frequency domain. In \cite{candes:fio}, a slightly different algorithm is proposed.
\begin{enumerate}
\item Uniformly and randomly select $\l$ rows and columns to form $A_{R:}$ and $A_{:C}$.
\item Perform SVD on $A_{:C}$. Say $A_{:C} = U_1 \Lambda_1 V_1^T+U_2 \Lambda_2 V_2^T$ where $U,V$ are unitary and $\norm{\Lambda_2}\leq \delta$, a user specified parameter.
\item Return the low rank representation $U_1 U_{1,R:}^+ A_{R:}$.
\end{enumerate}
In the words of the authors, ``this randomized approach works well in practice although we are not able to offer a rigorous proof of its accuracy, and expect one to be non-trivial'' \cite{candes:fio}. We believe this is because $A$ does satisfy the assumptions of this paper. See (\ref{eq:A}), (\ref{eq:eps}).

To see why, let $B$ be a perturbation of $A$ such that $B_{:C}=U_1 \Lambda_1 V_1^T$ and $\norm{A-B}\leq \delta$. Since $\Lambda_1$ is invertible, the output can be rewritten as
$$U_1 U_{1R}^+ A_{R:} = B_{:C}  B_{RC}^+ A_{R:}.$$

It is easy to adapt the proof of Theorem \ref{thm:fullspread-k3} to bound $\norm{A-B_{:C} B_{RC}^+ A_{R:}}$ --- just replace (\ref{eq:firststep}) with the following. The rest of the proof is the same.
\begin{align*}
\norm{A -B_{:C} B_{RC}^+ A_{R:}} &\leq \norm{A-B}+\norm{B-B_{:C} B_{RC}^+ B_{R:}} + \norm{B_{:C} B_{RC}^+ B_{R:} - B_{:C} B_{RC}^+ A_{R:}}\\
&\leq  \delta + \norm{B-B_{:C} B_{RC}^+ B_{R:}}+\norm{B_{:C}B_{RC}^+}\delta.
\end{align*}

An important subclass of Fourier integral operators is pseudodifferential operators. These are linear operators with pseudodifferential symbols that obey certain smoothness conditions \cite{shubin01}. In Discrete Symbol Calculus \cite{demanet2011dsc}, a similar randomized algorithm is used to derive low rank factorizations of such smooth symbols. It is likely that the method works well here in the same way as it works well for a smooth kernel as discussed in the previous section.

\bibliography{CUR}
\bibliographystyle{siam}
\end{document}